\theoremstyle{plain}
\newtheorem{teo}{Theorem}
\newtheorem{prop}{Proposition}
\theoremstyle{definition}
\newtheorem{lemma}{Lemma}
\newtheorem{rem}{Remark}
\def\B{\textup{B}}
\newcommand{\R}{\ensuremath{\mathbb R}}
\theoremstyle{remark}
\newcommand{\lra}{\longrightarrow}
\newcommand{\pmo}{^{\pm 1}}
\newcommand{\mo}{^{-1}}
\newcommand{\ms}[1]{{\small$#1$}} 
\newcommand{\mss}[1]{{\footnotesize$#1$}} 
\newcommand{\msss}[1]{{\scriptsize$#1$}} 
\newcommand{\mm}{\text{-}}
\newcommand{\pp}{\text{+}}
\begin{document}

\selectlanguage{english}

\title{A Markov theorem for generalized plat decomposition}

\author{Alessia~Cattabriga\footnote{A.~Cattabriga has been supported by the "National Group for Algebraic and Geometric Structures, and their Applications" (GNSAGA-INdAM) and University of Bologna, funds for selected research topics.} - Bostjan~Gabrov\v{s}ek\footnote{B.~Gabrov\v sek has been supported by Slovenian Research Agency grants J1-8131, J1-7025, and N1-0064.}}

\maketitle

\begin{abstract}
We prove a Markov theorem for tame links in a connected closed orientable 3-manifold $M$  with respect to a plat-like representation.  More precisely, given a genus $g$ Heegaard surface $\Sigma_g$ for $M$   we represent each link in $M$   as  the plat closure of a braid in the surface braid group $B_{g,2n}=\pi_1(C_{2n}(\Sigma_g))$ and analyze how to translate the  equivalence of links in $M$ under ambient isotopy into an algebraic equivalence in $B_{g,2n}$.  First, we study the equivalence problem  in $\Sigma_g\times [0,1]$,  and then, to obtain the equivalence in $M$, we investigate how   isotopies corresponding to ``sliding'' along meridian discs change the braid representative.  At the end we provide explicit constructions for Heegaard genus 1 manifolds, i.e. lens spaces and $S^2\times S^1$.\\

\noindent{{\it Mathematics Subject
Classification 2010:} Primary 57M27, 20F38; Secondary 57M25.}

\end{abstract}

\begin{section}{Introduction and preliminaries}
\label{preliminari}

The connection among links and braids dates back to the thirties when  the results of Alexander (\cite{A}) and Markov (\cite{Ma}) showed that it is possible to represent each link using a braid, by ``closing it up'',  and described the equivalence moves connecting two different braids representing the same link. Forty years later in \cite{B} Joan Birman investigated  another way to use braids with an even number of strands to represent links:   closing them  in the so called  ``plat'' way. She proved that each link is the plat closure of a braid  and that two different braids representing the same link are connected through a stabilization move and moves corresponding to the generators of  a subgroup of the braid group studied by Hilden in \cite{H}. 

After these pioneer works many important results succeeded one another, as for example the use of braid representations to construct links invariants. In this direction an interesting result is a   description of the Jones polynomial of a link  in terms of the action of a braid,  having the link as  plat closure, over a homological pairing  defined on a covering of the configuration space of $n$ points into the $2n$-punctured disc (see \cite{Big}).

In the light of these fruitful connections, many authors investigated the possibility to use braids to represent links also in  connected closed orientable 3-manifolds different from  $S^3$. A first generalization of Markov's and Alexander's results  was presented in \cite{Sk}, using the idea of fibered knots, while in \cite{DLP, Lamb1, Lamb2} another generalization is reached via  mixed braids, i.e., braids having a part of the  strands representing the ambient 3-manifold, via Dehn surgery. 

With respect to plat closure, the first attempt to generalize Birman's and Hilden's results was done in \cite{BC}, using the notion of generalized bridge decomposition, i.e.,   plat closure with respect to Heegaard surfaces. More precisely, the authors proved that given a connected closed orientable 3-manifold $M$ and  a Heegaard surface $\Sigma$ in $M$, each link  can be represented as the plat closure of an element  of $B_{2n}(\Sigma)$, the braid group on $2n$ strings of $\Sigma$. Moreover, they studied a subgroup  of $B_{2n}(\Sigma)$, that they named the generalized Hilden group, acting trivially in the representation and asked whether, as in the classical case, this group, up to  a stabilization move, is enough to describe the equivalence. 

In this paper we refer to this question by generalizing Birman's equivalence theorem in this setting, i.e., presenting a finite set of moves connecting  two braids in  $B_{2n}(\Sigma)$ representing isotopic links in the manifold (see Theorem~\ref{thm:markov}). The result is reached in two steps: first we study the equivalence problem in $\Sigma\times I$, then we add ``slide like'' moves to take care of isotopies that are defined in  the whole  manifold. While the moves arising in the first step do depend only on the genus of $\Sigma$, and so are the same in each manifold having Heegaard genus at most $g$, the nature of both the manifold and  the Heegaard surface involved in the representation are encoded in the slide like moves. In order to represent the moves geometrically we borrow the idea of arrow diagrams used in \cite{GM1, Mr1, Mr2}.

Our approach seems to be quite flexible, since, once the manifold and the Heegaard surface are fixed,  in order to describe explicitly the equivalence, it is enough to find the words in $B_{2n}(\Sigma)$ whose plat closures are the boundaries of two systems of  meridian disks of the Heegaard decomposition associated to $\Sigma$.

Further development on the topic may include studying the connection among surface braids when the Heegaard splitting is not fixed, e.g., studying the algebraic relation between braid representatives of a link with respect to the stabilization of Heegaard splittings. Another open question is the construction of new link invariants or the revision of old ones in terms of the braid representative. Finally, it could also be interesting to expand the set of examples, worked out in this paper for Heegaard genus one manifolds, to higher Heegaard genus manifolds.

The paper is organized as follows: in Section~\ref{Heegaard} we review the notion of generalized plat decomposition and how a link can be represented through elements of surface braid groups; in Section~\ref{Markov} we investigate the equivalence in thickened surfaces while in Section~\ref{slide} we prove the main theorem of the paper, that is, we describe the moves connecting braids representing isotopic links. The last section is devoted to the explicit algebraic description of the equivalence moves in manifolds with Heegaard genus at most one:   $S^3$, where  we obtain Birman's result, lens spaces and $S^2\times S^1$. \\

We end this section by recalling some well-known facts  about link isotopy in 3-manifolds in order to fix our notations and conventions.  Throughout the text all the 3-manifolds are supposed to be connected, closed and orientable and we will consider only tame links (i.e., 1-dimensional closed PL-submanifolds). 

Two links $L$ and $L'$  in a 3-manifold $M$ are \emph{equivalent} if there exists an ambient isotopy of $M$ taking $L$ into $L'$, i.e., there exist a PL-map $H:M\times [0,1]\to M$ such that: $H(\cdot,t)$ is a PL-homeomorphism of $M$ for each $t \in [0,1]$,  $H(\cdot, 0)=\textup{id}_M$ and $H(L,1)=L'$. 

In the PL-category ambient isotopy is realized through a finite sequence of the so called $\Delta$-moves. A $\Delta$\emph{-move} (and its inverse) on a link $L$ in $M$ is a local elementary combinatorial isotopy move, realized as follows: (1) take a closed ball $B^3$ embedded into $M$ and such that $L \cap B^3$ is a trivial arc $a$  properly embedded in $B^3$ (i.e., an arc that co-bounds an embedded disk with another arc on $\partial B^3$);  (2)  replace the arc $a$ by two other arcs such that all three arcs span an embedded triangle in $B^3$ intersecting $L$ only in $a$. If  $[V_1,\ldots, V_n]$ denotes the $(n-1)$-simplex having $V_1,\ldots, V_n$ as vertices, the previous move can be combinatorially described as
$$L\longleftrightarrow (L-[P,Q])\cup [P,R]\cup[R,Q]$$
with $\{P,Q\}=\partial a$ and $[P,Q,R]\cap L=[P,Q]$.

\medskip

\medskip

\noindent \textit{Acknowledgement:} A.~Cattabriga  has been supported by the ``National Group for Algebraic and Geometric Structures, and their Applications'' (GNSAGA-INdAM) and University of Bologna. B.~Gabrov\v{s}ek was financially supported by the Slovenian Research Agency grants J1-8131, J1-7025, and N1-0064. 

\end{section}

\begin{section}{Heegaard surfaces and generalized plat decompositions}

\label{Heegaard}
In this section  we review the notion of generalized plat decomposition, introduced in \cite{Do}, and describe how a link can be represented through elements of surface braid groups (see \cite{BC}). We end the section by describing the set of generators of surface braid groups given in \cite{Be}.\\

A \emph{Heegaard surface} for a 3-manifold $M$ is a connected closed orientable surface $\Sigma$  embedded  in $M$ such that $M\setminus \Sigma$ is the disjoint union of two handlebodies (of the same genus).  From an extrinsic point of view, we can say that  $M$ is homeomorphic to $H_1 \cup_h H_2$, where $H_1$ and $H_2$ are two oriented copies of a standard handlebody in $\mathbb R^3$ (see Figure~\ref{figstandard})   and $h:\partial H_2 \lra \partial H_1 $ is an orientation reversing homeomorphism. The triple $(H_1,H_2,h)$ is called \emph{Heegaard splitting} of $M$ and the Heegaard surface is $\partial H_1\cup_h\partial H_2$.   Each $3$-manifold admits Heegaard splittings  (see \cite{He}), moreover,  Heegaard splittings are 3-dimensional cases of symmetric version handle decomposition, that holds for each  differentiable manifolds of odd dimension (see \cite{Mi}).  Indeed,  one handlebody is obtained by attaching  $g$ $1$-handles to a $0$-handle, while attaching  $g$ $2$-handles to one $3$-handle, gives, up to duality, the other handlebody. The \emph{Heegaard genus} of a 3-manifold $M$ is the minimal genus of a Heegaard surface for $M$: the $3$-sphere $S^3$ is the only $3$-manifold with Heegaard genus $0$, while the manifolds with Heegaard genus $1$ are lens spaces (i.e., cyclic quotients of $S^3$) and $S^2\times S^1$. 
While $S^3$, as well as lens spaces, have, up to isotopy, only one Heegaard surface of minimal genus (and those of higher genera are stabilizations of that of minimal genus), in general, a manifold may admit non isotopic  Heegaard surfaces of the same genus (see for example  \cite{M} for the case of Seifert manifolds).\\

Heegaard surfaces are the tool that leads to a generalization of the classical  notion of bridge decomposition for links in   $\mathbb R^3$ (or  $S^3$)  to the case of 3-manifolds (see \cite{Do}). 
Given a handlebody $H$, we say  that a  set of $n$ properly embedded disjoint arcs  $\{A_1,\ldots ,A_n\}$ is a \emph{trivial system} of arcs
if  there exist $n$ mutually disjoint embedded discs,
called \emph{trivializing discs}, $D_1,\ldots ,D_n\subset H$
such that $A_i\cap D_i= A_i\cap\partial D_i=A_i$, $A_i\cap
D_j=\emptyset$ and $\partial D_i-A_i\subset\partial H$ for all
$i,j=1,\ldots ,n$ and $i\neq j$. We say that the arc $A_i$ \emph{projects} onto the arc $\partial D_i-\textup{int}(A_i)\subset\partial H$ (via $D_i$). Let $\Sigma$ be a Heegaard surface for $M$.   We say that a link $L$ in $M$ is in \emph{bridge position} with respect to $\Sigma$ if:  
\begin{itemize}
\item[(i)] $L$ intersects $\Sigma$ transversally and
\item[(ii)]the intersection of $L$ with both handlebodies, obtained  splitting $M$ by  $\Sigma$,
 is a trivial system of arcs.
\end{itemize}
Such a decomposition for $L$ is called $(g,n)$\emph{-decomposition} or $n$-\emph{bridge
decomposition of genus} $g$, where  $g$ is the genus of $\Sigma$ and $n$ is the cardinality of the trivial system. The minimal $n$ such that
$L$ admits a $(g,n)$-decomposition is called the \emph{genus} $g$ \emph{bridge number} of $L$.
Clearly if $g=0$, the manifold $M$ is the 3-sphere and   we get the usual notion of bridge decomposition and
bridge number of links in the 3-sphere (or in $\R^3$).  The notion of $(g,b)$-decomposition is a useful tool 
to study links in $3$-manifolds (see for example \cite{C, CM2, CMV, GMM}).

As bridge decompositions of links in $S^3$ (or $\mathbb R^3$) are connected to plat closures of classical braids, $(g,b)$-decompositions can be used to represent links in $3$-manifolds via braid groups of surfaces as follows (see also \cite{BC,CM}).

Let  $\Sigma_g$ be a genus $g$ Heegaard surface for a $3$-manifold $M$ and  let $ \mathbf c =\{ c_1,\ldots, c_g\}$ and  $ \mathbf c^* =\{ c^*_1,\ldots, c^*_g\}$ be the boundaries of two systems of  meridian discs of the two handlebodies.\footnote{Recall that  a genus $g$  handlebody is uniquely determined, by the boundary surface and  the boundaries of a system of $g$ disjoint meridian discs whose complement is (homeomorphic to) the 3-ball.} In terms of handle decomposition we can think of $\mathbf c$ as the \emph{attaching circles} of the $2$-handles and of $\mathbf c^*$ as the \emph{dual attaching circles} of the $1$-handles, i.e., the attaching circles of the dual 2-handles in the ``upside down'' decomposition (see \cite{GS}).
Starting from the data $(\Sigma_g,\mathbf c, \mathbf c^*)$ we can reconstruct $M$ by: considering the thickened surface $\Sigma_g \times [0,1]$, gluing  $2$-handles  along the curves   $ \mathbf c\times \{1\} \subset \Sigma_g\times \{1\}$ and  another set of (dual) $2$-handles along the curves $\mathbf c^*\times \{0\} \subset\Sigma_g\times \{0\}$, and closing the resulting manifold by attaching a $3$-handle and a (dual) $3$-handle. Given a Heegaard splitting of a 3-manifold $M$, we call the triple $(\Sigma_g,\mathbf c, \mathbf c^*)$ \textit{Heegaard diagram} of the splitting. 
Up to homeomorphism, we can always suppose that $\Sigma_g$ and  $ \mathbf c^*$ are as depicted in Figure~\ref{figstandard}, while $\mathbf c$  depends on $M$ and on the chosen Heegaard surface. 

Referring to  Figure~\ref{figstandard}, let  $\mathcal P_{2n}=\{P_1,\ldots,P_{2n}\}$ be a set of $2n$ distinct points  on $\Sigma_g$ and denote with  $B_{g,2n}$, the braid group  on $2n$-strands of $\Sigma_g$, i.e, the fundamental group  of the configuration space of the $2n$ points in $\Sigma_g$.  Fix a set of $n$ arcs $\gamma_1,\ldots,\gamma_n$ embedded into $\Sigma_g$, such that $\gamma_i\cap \gamma_j=\emptyset$ if $i\ne j$ and $\partial \gamma_i=\{P_{2i-1},P_{2i}\}$, for $i,j=1,\ldots,n$.  Given an element $\beta\in B_{g,2n}$,  realize it as a geometric braid, that is, as a set of $2n$ disjoint paths in $\Sigma_g\times\left[ 0,1\right]$ connecting   $\mathcal P_{2n}\times \left\{0\right\}$ to 
 $\mathcal P_{2n}\times \left\{1\right\}$.  The  \emph{plat closure} $\widehat{\beta}\subset M$ of $\beta$  is  the link obtained  ``closing'' $\beta$ by connecting   $P_{2i-1}\times\{0\}$ with  $P_{2i}\times\{0\}$ through   $\gamma_i\times\{0\}$ and $P_{2i-1}\times\{1\}$ with  $P_{2i}\times\{1\}$  through  $\gamma_i\times\{1\}$, for $i=1,\ldots,n$. Clearly,  $\widehat{\beta}$ is in bridge position with respect to $\Sigma_g$ and so it has genus $g$ bridge number at most $n$. 
 Note that this closing procedure does not depend on the system of
 meridian curves $\mathbf c$, however, as we will see, $\mathbf c$ characterizes the manifold, so
 the isotopy type of the resulting link does depend on it.
 
In Figure~\ref{figex} an example is represented with $g=1$ and $n=2$, with $\Sigma_1\cong S^1\times S^1$ represented as a square with opposite sides identified.

\begin{figure}[ht]
\centering
	\begin{overpic}[page=38]{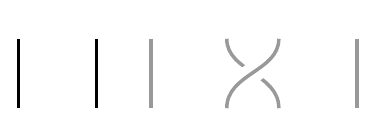}%
	 \put(9,5.5){$c^*_1$}\put(24,5.5){$c^*_2$}\put(38,5.5){$\cdots$}\put(46.5,5.5){$c^*_g$}
	 \put(59,12){\mss{P_1}}\put(65,12){\mss{P_2}}
	 \put(71,12){\mss{P_3}}\put(77,12){\mss{P_4}}\put(81,9.75){\mss{\cdots}}
	 \put(84,12){\mss{P_{2n-1}}}\put(93,12){\mss{P_{2n}}}
	 \put(62.5,7){$\gamma_1$}\put(74,7){$\gamma_1$}\put(89.5,7){$\gamma_n$}
	 \end{overpic}
        \caption{The standard choice for $\Sigma_g$, $\mathbf c^* =\{c^*_1,\ldots,c^*_g\}$ and $\gamma_1,\ldots,\gamma_n$.}
\label{figstandard}
\end{figure}

\begin{figure}[ht]
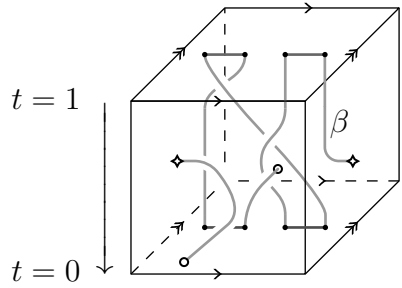

\centering
	\begin{overpic}[page=39]{images}%
	\put(72,53){$\beta$}
	\put(-7,64){\rotatebox{270}{$\xrightarrow{\makebox[2.1cm]{}}$}}
	\put(-36,2){$t=0$}\put(-36,61){$t=1$}
	 \end{overpic}
        \caption{An example of a closure $\widehat{\beta}$ with $\beta \in B_{1,4}$.}
\label{figex}
\end{figure}

As in the classical case,   it is possible to prove an Alexander representation theorem, i.e., each link in $M$ is  isotopic to  the plat closure  of a braid in $B_{g,2n}$. This follows essentially from \cite[Proposition 4.6]{BC}, where, however,  a slightly different approach is used. So  here we describe a more topological proof using techniques analogous to those used in  \cite[Lemma 2]{B}, that will be useful throughout the rest of the text.

\begin{teo}\label{lemma:lins}
Every   link $L$ in a 3-manifold $M$ having  Heegaard  genus at most $g$, may be braided to  a geometric braid in $B_{g,2n}$, the plat closure of which is equivalent to $L$.
\end{teo}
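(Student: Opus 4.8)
The plan is to reduce the statement to the classical plat argument of \cite[Lemma 2]{B} by first confining $L$ to a product region around $\Sigma_g$. The key ingredient is topological: if $\Gamma_1\subset H_1$ and $\Gamma_2\subset H_2$ are spines of the two handlebodies of a genus $g$ Heegaard splitting of $M$, then the complement of the two spines is a product, $M\setminus(\Gamma_1\cup\Gamma_2)\cong\Sigma_g\times(0,1)$, where the Heegaard surface corresponds to the middle level $\Sigma_g\times\{1/2\}$ and $H_1,H_2$ to the two halves $\Sigma_g\times(0,1/2]$ and $\Sigma_g\times[1/2,1)$. Since $L$ is a tame $1$-dimensional submanifold and $\Gamma_1,\Gamma_2$ are $1$-complexes, a general position argument lets me isotope $L$ off $\Gamma_1\cup\Gamma_2$, hence into the collar $\Sigma_g\times[0,1]$. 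This is the only step that is genuinely new with respect to the $3$-sphere case of \cite{B}, where the two spines are merely the centres of the two Heegaard balls.

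Once $L\subset\Sigma_g\times[0,1]$, I would work with the height function $\mathrm{pr}_2\colon\Sigma_g\times[0,1]\to[0,1]$ exactly as in the planar case. First put $L$ in general position with respect to $\mathrm{pr}_2$, so that the restriction $\mathrm{pr}_2|_L$ has finitely many nondegenerate local maxima and minima with pairwise distinct critical values. The core of the argument is then a rearrangement: by interchanging adjacent critical values one isotopes $L$ so that every local maximum lies above the level $\Sigma_g\times\{1/2\}$ and every local minimum below it. After this move $L$ meets $\Sigma_g\times\{1/2\}$ transversally in an even number $2n$ of points; the part of $L$ above the level is a union of $n$ single-maximum arcs and the part below a union of $n$ single-minimum arcs. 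Pushing the maxima down to a common height just above $1/2$ and the minima up to just below $1/2$ makes each of these a short arc contained in a thin sub-collar, hence a trivial arc in the handlebody ($H_2$, respectively $H_1$) absorbing that sub-collar; thus $L$ is placed in bridge position with respect to $\Sigma_g$.

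The complementary monotone part of $L$, lying in the slab between the highest minimum and the lowest maximum, consists of $2n$ arcs each transverse to every intermediate level surface. Reparametrising this slab as $\Sigma_g\times[0,1]$, these arcs form a geometric braid and so determine an element $\beta\in B_{g,2n}=\pi_1(C_{2n}(\Sigma_g))$. It then remains to normalise the boundary data: using an ambient isotopy of $\Sigma_g$, extended to the collar, I would carry the $2n$ endpoints to the standard points $\mathcal P_{2n}$ and the single-extremum arcs onto the standard closing arcs $\gamma_i$, with $\partial\gamma_i=\{P_{2i-1},P_{2i}\}$; any permutation of the punctures needed to realise this pairing is absorbed into $\beta$. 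With these identifications the plat closure $\widehat\beta$ is, by construction, isotopic to the original link $L$, which proves the statement.

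I expect the rearrangement of the critical values to be the main obstacle, precisely as in the classical setting: one must verify that a local minimum sitting immediately below an adjacent local maximum can be exchanged by an isotopy supported in a small ball, without creating new intersections among the strands, and that this exchange can be performed inside the product region $\Sigma_g\times[0,1]$ rather than in $\R^3$. A secondary, more bookkeeping difficulty is the final standardisation: one has to check that straightening the trivial arcs onto the $\gamma_i$ and relabelling the endpoints only alters $\beta$ within $B_{g,2n}$ and leaves the isotopy class of $\widehat\beta$ unchanged.
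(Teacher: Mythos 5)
Your proposal is correct and follows essentially the same route as the paper: isotope $L$ into a collar $\Sigma_g\times I$ of the Heegaard surface (the paper simply asserts this step, which your spine argument justifies) and then braid $L$ inside the collar by pushing the local extrema of the height function to the two boundary levels, reading the braid off the monotone part and normalising endpoints via the connectivity of the configuration space of $\Sigma_g$. The one point you flag as the main obstacle --- exchanging adjacent critical values without creating new intersections --- is handled in the paper not by a rearrangement lemma but by a general-position condition (property~(*)) guaranteeing that the vertical fibre through each turning point meets $L$ only in the two adjacent edges, so each maximum (resp.\ minimum) is pushed all the way to $\Sigma_g\times\{1\}$ (resp.\ $\Sigma_g\times\{0\}$) by an isotopy supported in a thin fibre neighbourhood, with no reordering of critical values required.
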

\begin{proof}
Let $(\Sigma_g,\mathbf c,\mathbf c^*)$ be a Heegaard diagram for $M$, where $\Sigma_g$ and $\mathbf c^*$ are those depicted in  Figure~\ref{figstandard}. Up to isotopy, we can assume that $L\subset N(\Sigma_g)$, where $N(\Sigma_g)$ denotes a closed tubular neighborhood of $\Sigma_g$. We choose a parametrization  $N(\Sigma_g)\cong \Sigma_g\times I$, with $I=[0,1]$, such that $L\subset \Sigma_g\times[0.25,0.75]$ and  fix an orientation on each component of $L$.
 By \emph{projecting} a point of $N(\Sigma_g)$ onto a boundary component of $\partial(\Sigma_g\times I)$, we mean moving the point along the $I$ trivial fibration until the required boundary component is reached. Let $Q_1,\ldots, Q_k$, be  the vertices in a PL-decomposition of  $L$  having  the following property: 
 
 \begin{alignat*}{1} 
\ & \textup{given a triple of consecutive points (according to the} \notag \\ \ & \textup{orientation of } L\textup{) } Q_{i-1},Q_i,Q_{i+1}\textup{ there exist an  open  } \notag \\
\ & \textup{neighborhood } N_i \textup { of the } I \textup{ fibre trough } Q_i \textup{ in }N(\Sigma_g) \textup{ whose  } \tag{*}\\\ & \textup{intersection with } L \textup{ is contained in  } [Q_{i-1}Q_i]\cup  [Q_iQ_{i+1}].
 \end{alignat*}

\begin{figure}[ht]
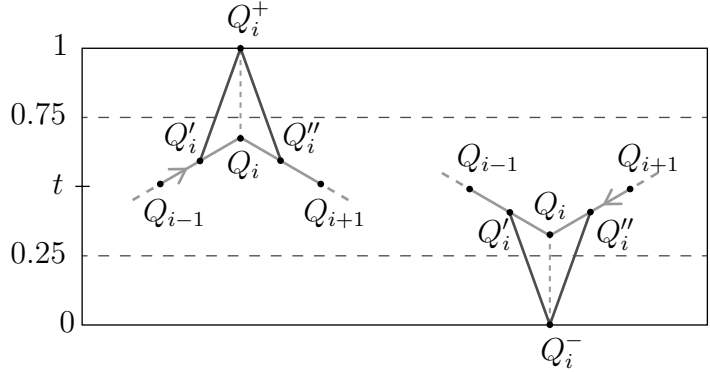

\centering
	\begin{overpic}[page=71]{images}
		\put(-3,34){$0.75$}\put(-3,14){$0.25$}\put(3,24){$t$}
		\put(3,44){$1$}\put(4,4){$0$}
		\put(16,20){$Q_{i-1}$}\put(28,27){$Q_i$}\put(39,20){$Q_{i+1}$}
		\put(19,31){$Q'_i$}\put(28,48){$Q^+_i$}\put(36,31){$Q''_i$}
		\put(61,28){$Q_{i-1}$}\put(72,21){$Q_i$}\put(84,28){$Q_{i+1}$}
		\put(64,17){$Q'_i$}\put(73,0){$Q^-_i$}\put(81,17){$Q''_i$}
	 \end{overpic}
        \caption{The braiding process.}
\label{fig_alex}
\end{figure}

 
 By a general position argument we can assume that $L$, up to isotopy,  contains no arcs  which are  horizontal with respect to the height function associated to the projection onto the $I$ factor. Moreover, we say that a subarc of $L$ is oriented upwards (resp. downwards) if moving along it, with respect to the fixed orientation, its projection over $I$ is increasing (resp. decreasing). Note that each component of $L$ contains at least one arc oriented upwards and one arc oriented downwards.

 We will construct a link $L'$ isotopic to $L$ such that:
 \begin{itemize}
  \item[(1)] $L'$ is contained in $N(\Sigma_g)$,
  \item[(2)] the link $L'$ meets both $\Sigma_g\times \{0\}$ and $\Sigma_g\times \{1\}$ in $n$ points,  and meets $\Sigma_g\times \{t\}$, for each $t\in(0,1)$ in exactly $2n$ points.  
 \end{itemize}
 For each connected component of $L$, fix a point inside it contained in an arc oriented upwards and repeat the following process:  moving along the  component according to the fixed orientation and starting from the distinguished point let $[Q_{i},Q_{i+1}]$ be the first arc which is oriented downwards;  consider the neighborhood $N_i$ of $Q_i\times I$ defined in (*) and let $Q'_i$, $Q_i''$ be the unique point of intersection of $\partial N_i$ with, respectively, $[Q_{i-1},Q_i]$ and $[Q_{i},Q_{i+1}]$ and $Q_i^+$ be the projection of $Q_i$ onto $\Sigma_g\times\{1\}$  (see Figure~\ref{fig_alex}). Replace  $[Q_{i-1},Q_i]\cup [Q_{i},Q_{i+1}]$ with $[Q_{i-1},Q_i']\cup[Q_i',Q_i^+]\cup[Q^+_{i},Q''_{i}]\cup [Q''_i,Q_{i+1}]$. This move can be clearly decomposed into  a sequence of $\Delta$-moves.  Now go on, along the same component of the link, until you meet the first subarc $[Q_{k}, Q_{k+1}]$ which is oriented upwards  and replace    $[Q_{k-1},Q_k]\cup [Q_{k},Q_{k+1}]$ with $[Q_{k-1},Q_k']\cup[Q_k',Q_k^-]\cup[Q^-_{k},Q''_{k}]\cup [Q''_k,Q_{k+1}]$, where now $Q^-_k$ is the projection of $Q_k$ onto $\Sigma_g\times \{0\}$. Let $L'$ be the link obtained at the end of the process: clearly $L'$ is isotopic to $L$ and $L'$ satisfies properties (1) and (2).
 
 Let $\mathcal Q^-=\{Q_1^-,\ldots, Q_n^-\}$ (resp. $\mathcal Q^+=\{Q_1^+,\ldots, Q_n^+\}$) be the set of points in $\Sigma_g$ defined by the condition $L'\cap \left(\Sigma_g\times \{0\}\right)=\mathcal Q^-\times \{0\}$ (resp.   $L'\cap \left(\Sigma_g\times \{1\}\right)=\mathcal Q^+\times \{1\}$). 
 Referring to Figure~\ref{figstandard}, for each arc $\gamma_i$,  let $B_i$ be an internal point of the arc. Since the configuration  space $C_n(\Sigma_g)$ of $n$ point in $\Sigma_g$ is arc connected (see \cite{B1}) there exist two paths $p^-(t):I\to C_n(\Sigma_g)$ and $p^+(t):I\to C_n(\Sigma_g)$ such that $p^-(0)=\{B_1,\ldots, B_n\}=p^+(1)$, $p^-(1)=\mathcal Q^-$ and $p^+(0)=\mathcal Q^+$. Properly rescaling the interval $I$ and deforming $L'$ along the  graph of such paths, we obtain an isotopic  link $L''$  satisfying (1) and (2) and such that $L''\cap \left(\Sigma_g\times \{0\}\right)=\{B_1,\ldots, B_n\}\times\{0\}$ and $L''\cap\left(\Sigma_g\times \{1\}\right)=\{B_1,\ldots, B_n\}\times\{1\}$. Then there clearly exists an $\eta>0$ such that $L''\cap [\eta,1-\eta]$ is a well defined element $\beta\in B_{g,2n}$, with $\widehat\beta$ equivalent to $L''$.
\end{proof}

\begin{rem} \label{thick} The procedure described in the previous proof is called the \emph{braiding process}. Since the braiding process is realized in $\Sigma_g\times I$, we have that also  each link in a thickened surface is equivalent to the plat closure of a braid.   
\end{rem}

We end this section by recalling the presentation of   $B_{g,2n}$  given in \cite{Be}. The generators are:   $\sigma_1,\ldots,\sigma_{2n-1}$, the standard braid ones, and $a_1,\ldots,a_g,b_1,\ldots, b_g$, where $a_i$ (resp. $b_i$) is the braid whose strands are all trivial except the first one which goes once along the $i$-th longitude (resp. $i$-th meridian) of $\Sigma_g$ (see Figure~\ref{figgen}).  The relations are the following ones:

\begin{figure}[h!]
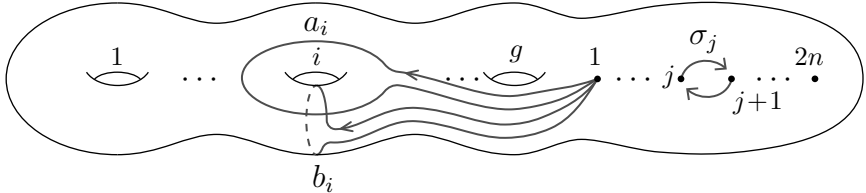

\centering
\begin{overpic}[page=64]{images}\put(35,16){$a_i$}\put(36,-2){$b_i$}\put(79,14.5){$\sigma_j$}
\put(21,9.5){$\cdots$}\put(51,9.5){$\cdots$}\put(70.5,9.5){$\cdots$}\put(86.5,9.5){$\cdots$}
\put(13,12){\mss{1}}\put(36,12){\mss{i}}\put(58.5,13){\mss{g}}
\put(67.5,12){\mss{1}}\put(76,10){\mss{j}}\put(84,7){\mss{j\!+\!1}}\put(91,12){\mss{2n}}
	 \end{overpic}
\caption{The generators of $B_{g,2n}$.}
\label{figgen}
\end{figure}


 \begin{itemize}
 \item[-] Braid relations:
                  $$\sigma_i\sigma_{i+1}\sigma_i=\sigma_{i+1}\sigma_i\sigma_{i+1}\qquad(i=1,\ldots,2n-2)$$
                  $$\sigma_{i}\sigma_{j}=\sigma_{j}\sigma_{i}\qquad(i,j=1,\ldots,2n-1,\ \vert i-j\vert \geq 2)$$
  \item[-] Mixed relations:
\begin{align*}
  (R1)  & \qquad a_r\sigma_i=\sigma_i a_r \qquad(1\leq r\leq g,\ i\ne 1)\\
  \ & \qquad b_r\sigma_i=\sigma_i b_r \qquad(1\leq r\leq g,\ i\ne 1)   \\
  (R2)  & \qquad \sigma_1^{-1} a_r\sigma_1^{-1}a_r =a_r\sigma_1^{-1}a_r \sigma_1 ^{-1}\qquad(1\leq r\leq g)\\
  \ & \qquad \sigma_1^{-1} b_r\sigma_1^{-1}b_r =b_r\sigma_1^{-1}b_r \sigma_1 ^{-1}\qquad(1\leq r\leq g)  \\
    (R3)  & \qquad \sigma_1^{-1} a_s\sigma_1 a_r =a_r\sigma_1^{-1}a_s\sigma_1 \qquad(s<r)\\
  \ & \qquad \sigma_1^{-1} b_s\sigma_1 b_r =b_r\sigma_1^{-1}b_s \sigma_1\qquad(s<r)  \\
  \ & \qquad \sigma_1^{-1} a_s\sigma_1 b_r =b_r\sigma_1^{-1}a_s \sigma_1\qquad(s<r)  \\
  \ & \qquad \sigma_1^{-1} b_s\sigma_1 a_r =a_r\sigma_1^{-1}b_s \sigma_1\qquad(s<r)  \\
  (R4)  & \qquad \sigma_1^{-1} a_r\sigma_1^{-1}b_r =b_r\sigma_1^{-1}a_r\sigma_1 \qquad(1\leq r\leq g)\\
  (TR) &\qquad \left[a_1,b_1^{-1}\right]\cdots  \left[a_g,b_g^{-1}\right]=\sigma_1\sigma_2\cdots\sigma_{2n-1}^2\cdots \sigma_2\sigma_1
  \end{align*}
 \end{itemize}
where $[a,b]:=aba^{-1}b^{-1}$. 

We will depict a braid in $B_{g,2n}$ 
using a set of $g$ \emph{fixed strands} on the left (that intuitively represent the $g$ holes of $\Sigma_g$) and $2n$ \emph{moving strands} on the
right, which represent the braid. As depicted in Figure~\ref{fignotation}, 
we represent the generator $a_i$ and its inverse by the first moving strand winding around the $i$-th fixed strand and going over the rest of the fixed strands on the right, for $i=1,\ldots, g$. We represent $b_i$ (resp. $b_i^{-1}$), $i = 1,\ldots,g$, by an arrow labelled $i$ on the first moving strand pointing downwards (resp. upwards).


\begin{figure}[h!]
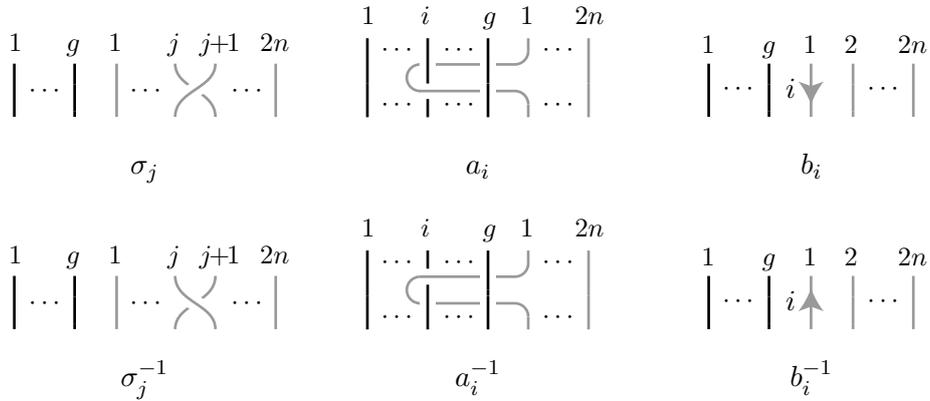

\centering

\begin{subfigure}[b]{.3\linewidth}\centering
\begin{overpic}[page=1]{images}\put(3,26){\mss{1}}\put(23,26){\mss{g}}
\put(37.5,26){\mss{1}}\put(58,26){\mss{j}}\put(69,26){\mss{j\!\!+\!\!1}}\put(90,26){\mss{2n}}
\put(10,10){\mss{\cdots}}\put(45,10){\mss{\cdots}}\put(80,10){\mss{\cdots}}
\end{overpic}\caption*{$\sigma_j$}\end{subfigure}
\
\begin{subfigure}[b]{.3\linewidth}\centering
\begin{overpic}[page=3]{images}\put(3,41){\mss{1}}\put(27,41){\mss{i}}\put(52,41){\mss{g}}
\put(67,41){\mss{1}}\put(89,41){\mss{2n}}
\put(11,28){\mss{\cdots}}\put(36,28){\mss{\cdots}}\put(76,28){\mss{\cdots}}
\put(11,6){\mss{\cdots}}\put(36,6){\mss{\cdots}}\put(76,6){\mss{\cdots}}
\end{overpic}\caption*{$a_i$}\end{subfigure}
\
\begin{subfigure}[b]{.3\linewidth}\centering
\begin{overpic}[page=6]{images}\put(3,31){\mss{1}}\put(29,31){\mss{g}}
\put(46.5,31){\mss{1}}\put(64,31){\mss{2}}\put(87,31){\mss{2n}}
\put(12,13){\mss{\cdots}}\put(74,13){\mss{\cdots}}\put(39,12){\ms{i}}
\end{overpic}\caption*{$b_i$}\end{subfigure}
\\[1em] 
\begin{subfigure}[b]{.3\linewidth}\centering
\begin{overpic}[page=2]{images}\put(3,26){\mss{1}}\put(23,26){\mss{g}}
\put(37.5,26){\mss{1}}\put(58,26){\mss{j}}\put(69,26){\mss{j\!\!+\!\!1}}\put(90,26){\mss{2n}}
\put(10,10){\mss{\cdots}}\put(45,10){\mss{\cdots}}\put(80,10){\mss{\cdots}}
\end{overpic}\caption*{$\sigma_j^{-1}$}\end{subfigure}
\
\begin{subfigure}[b]{.3\linewidth}\centering
\begin{overpic}[page=4]{images}\put(3,41){\mss{1}}\put(27,41){\mss{i}}\put(52,41){\mss{g}}
\put(67,41){\mss{1}}\put(89,41){\mss{2n}}
\put(11,28){\mss{\cdots}}\put(36,28){\mss{\cdots}}\put(76,28){\mss{\cdots}}
\put(11,6){\mss{\cdots}}\put(36,6){\mss{\cdots}}\put(76,6){\mss{\cdots}}
\end{overpic}\caption*{$a_i^{-1}$}\end{subfigure}
\
\begin{subfigure}[b]{.3\linewidth}\centering
\begin{overpic}[page=5]{images}\put(3,31){\mss{1}}\put(29,31){\mss{g}}
\put(46.5,31){\mss{1}}\put(64,31){\mss{2}}\put(87,31){\mss{2n}}
\put(12,13){\mss{\cdots}}\put(74,13){\mss{\cdots}}\put(39,12){\ms{i}}
\end{overpic}\caption*{$b_i^{-1}$}\end{subfigure}
\caption{Representing the generators of $B_{g,2n}$ and their inverses.}
\label{fignotation}
\end{figure}


\end{section}

\begin{section}{Markov theorem in thickened surfaces}
\label{Markov}

In this section we study the combinatorial equivalence for links in a thickened surface, generalizing results of \cite{B}. Once the statement is established,    the proof given in \cite{B} works almost without changes  also in this more general setting. Nevertheless, for readers' convenience, we  report here the main steps of the proof, with the only exception of Lemma 8 of \cite{B}  (see Proposition~\ref{lemma_birman}), which is really technical. \\

Let $\Sigma_g$ be the genus $g$ surface depicted in Figure~\ref{figstandard}. A link $L\subset \Sigma_g\times I$ is said to be in \textit{standard  position} if it intersects both $\Sigma_g\times \{0\}$ and $\Sigma_g\times \{1\}$ in $n$ points,  and meets $\Sigma_g\times \{t\}$, for each $t\in(0,1)$ in exactly $2n$ points.  We call the points of $\Sigma_g\times \{1\}$ (resp. $\Sigma_g\times \{0\}$) \emph{upper} (resp. \emph{lower}) \emph{boundary} points. A \emph{boundary point} is either an upper or a lower boundary point, while an \emph{interior point} is a point in $\Sigma_g\times (0,1)$.

The proof of Theorem \ref{lemma:lins}, along with the Remark  \ref{thick}, shows that each link in  $\Sigma_g\times I$ is isotopic to a link in standard position. Moreover it shows that each link $L$  in standard position is equivalent  to the plat closure of a braid in $B_{g,2n}$, where $n$ is the number of upper (or lower) boundary points of $L$.



Given a link $L$ in standard position, fix an orientation on it and consider  the following two moves, introduced in \cite{B}:

\begin{itemize}
 \item \emph{the spike move:} referring to Figure~\ref{figspike}, let $Q$, $B$, $P$ be consecutive points on $L$ such that $Q,P$ are interior points,  $B$ is a boundary point and $[Q,B,P]$ is an (embedded) triangle in $\Sigma_g\times I$ such that $[Q,B,P]\cap L=[Q,B]\cup[B,P]$. Let $B'\ne B$ be another boundary point on the same boundary component as $B$ and let $Q'$, $P'$ be interior points  satisfying the conditions $[P,P',Q']\cap L=\{P\}$, 
 $[Q,P',Q']\cap L=\{Q\}$ and $[P',B',Q']\cap L=\emptyset$. We replace   $[Q,B]\cup [B,P]$ on $L$ with  $ [Q,Q'] \cup [Q', B']\cup [B',P']\cup [P',P] $. 
Note that the spike move is executed by retracting the ``spike'' at $B$ to the ``base'' $[P,Q]$ and shooting out a new spike at $B'$, which will in general thread in and out of the other arcs of the link. So, generally it will not be possible to get the same result by  moving  the spike at $B$ directly along the boundary surface toward $B'$, since the other arcs of $L$ may interfere with such a move. 
 
\begin{figure}[ht]
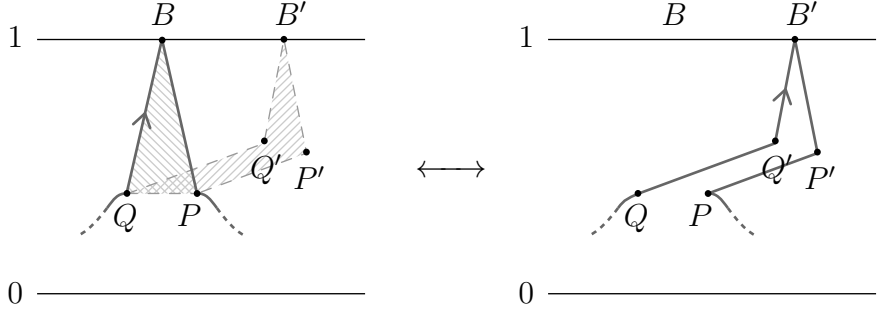

\centering
\begin{tabular}{ccc}
	\begin{overpic}[page=46]{images}%
	\put(0,70){$1$}\put(0,1.5){$0$}\put(38,76){$B$}\put(71,76){$B'$}
	\put(28,22){$Q$}\put(45,22){$P$}\put(64.5,35){$Q'$}\put(76,32){$P'$}
	\end{overpic} &
	\raisebox{1.75cm}{$\xleftrightarrow{\makebox[0.7cm]{}}$} &
	\begin{overpic}[page=47]{images}%
	\put(0,70){$1$}\put(0,1.5){$0$}\put(38,76){$B$}\put(71,76){$B'$}
	\put(28,22){$Q$}\put(45,22){$P$}\put(64.5,35){$Q'$}\put(76,32){$P'$}
	\end{overpic}
\end{tabular}
        \caption{The spike move on an upper boundary point.}
\label{figspike}
\end{figure}
 
 
 \item \emph{the stabilizing move:} referring   to Figure~\ref{figstab},  let $R$ be any internal point on $L$.  We can always assume, up to isotopy,  that the three consecutive points $P$, $R$, $Q$  on $L$  satisfy property (*). Let $R'$, $R''$ be the unique points of intersection of $\partial N$ (see  property (*)) with, respectively, $[P,R]$ and $[R,Q]$ and $R^+$,  $R^-$  be the projection of $R$ onto, respectively,  $\Sigma_g\times\{1\}$ and $\Sigma_g\times\{0\}$. We replace  $[R',R]\cup [R,R'']$ on $L$ with  $[R',R^+]\cup [R^+,R^-]\cup [R^-,R'']$. 
 
 \begin{figure}[ht]
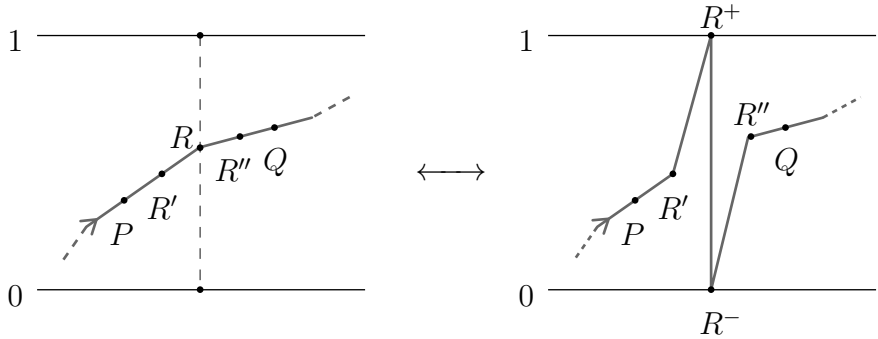

\centering
\begin{tabular}{ccc}
	\begin{overpic}[page=48]{images}%
	\put(0,70){$1$}\put(0,1.5){$0$}
	\put(27,18){$P$}\put(37,25){$R'$}\put(43,44){$R$}\put(55,35){$R''$}\put(68,38){$Q$}
	\end{overpic} &
	\raisebox{1.75cm}{$\xleftrightarrow{\makebox[0.7cm]{}}$} &
	\begin{overpic}[page=49]{images}%
	\put(0,70){$1$}\put(0,1.5){$0$}\put(48,76){$R^+$}\put(48,-6){$R^-$}
	\put(27,18){$P$}\put(37,25){$R'$}\put(57,50){$R''$}\put(68,38){$Q$}
	\end{overpic}
\end{tabular}
        \caption{The stabilization move.}
\label{figstab}
\end{figure}


\end{itemize}

Clearly both the moves  do not alter the property of being in standard position and are compositions of $\Delta$-moves. As a consequence, if we apply such  moves to a link in standard position, we will obtain an equivalent link still in standard position.  Moreover, while a spike move does not change the number of upper (or lower) boundary points, the stabilization move increases or decreases it by one.

In \cite[Lemma 8]{B} the converse statement is proved in the case of $g=0$, that is to say, for the classical plat closure in $ D^2\times I$  or $S^2\times I$. The proof  extends without changes to the case of higher genus,  so we get the following result.

\begin{lemma}

\label{lemma_birman}
Let $L$ and $L'$    be two  equivalent links in standard position. Then it is possible to connect  $L$ with $L'$ by a finite sequence of spike and stabilization moves.  
\end{lemma}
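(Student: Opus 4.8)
The plan is to establish a normal form for links in standard position and then show that any two links which are related by a single $\Delta$-move can be connected by spike and stabilization moves, provided both endpoints of the $\Delta$-move path are first put into a comparable standard position. Since ambient isotopy in the PL category factors through a finite sequence of $\Delta$-moves (as recalled in the introduction), it suffices to analyze one $\Delta$-move at a time. First I would fix a PL-decomposition of $L$ compatible with the height function coming from the projection onto the $I$ factor, and arrange, via general position, that no vertex of any triangle involved sits at the same height as another relevant vertex and that all arcs are monotone in $t$ except near the $2n$ interior maxima/minima that the standard position forces.

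The key technical reduction is to track how the number and position of the local extrema (the ``turnbacks'' of the strands) change under a $\Delta$-move. A generic $\Delta$-move either leaves the extremal structure unchanged, in which case it is realized by an isotopy through braids and amounts to a braid relation, or it creates/cancels a pair of extrema, or it slides an extremum across a boundary component. I would classify the $\Delta$-moves by the position of the triangle $[P,Q,R]$ relative to the two boundary surfaces $\Sigma_g\times\{0\}$ and $\Sigma_g\times\{1\}$: (i) triangles whose interior stays in $\Sigma_g\times(0,1)$ and which do not alter the extrema contribute braid-isotopy moves; (ii) triangles that push a strand through a boundary surface and back correspond, after standardization, to a stabilization move followed by its inverse, or to creation of a new bridge; (iii) triangles near a boundary that move a turnback from one boundary point to another realize the spike move. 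The crux is to show this list is exhaustive and that each case is reproducible by the two allowed moves together with the already-available braid isotopies.

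The main obstacle I expect is the bookkeeping in case (iii): when a turnback at a boundary point $B$ is dragged toward another boundary point $B'$, the intervening strands of the braid generally obstruct a direct isotopy, so one must verify that the \emph{retract-and-reshoot} description of the spike move genuinely captures the net effect of the local $\Delta$-moves, independently of how the new spike threads through the other arcs. This is precisely the content that makes the original Lemma~8 of \cite{B} delicate. I would handle it by an induction on the number of $\Delta$-moves, at each stage applying a spike move to normalize the offending turnback to a canonical boundary point before proceeding, so that the braid between consecutive normalizations is honestly a geometric braid to which braid isotopy applies.

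Finally, I would note the single genus-dependent subtlety: in \cite{B} the ambient space is $D^2\times I$ or $S^2\times I$, whereas here $\Sigma_g$ has nontrivial topology, so a strand may wind around the handles. Crucially, however, the spike and stabilization moves are defined purely in terms of local triangles and boundary points, and the braiding process of Theorem~\ref{lemma:lins} already shows that the extremal-point analysis is insensitive to the genus; the handles only enrich the set of braid isotopies available between normalizations, never the combinatorial structure of the extrema. Hence the argument of \cite{B} transfers verbatim once one observes that every step is local and that the configuration space $C_n(\Sigma_g)$ is path-connected, which is what makes the standardization in each inductive step possible.
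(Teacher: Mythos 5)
Your outline is essentially the same argument the paper relies on: the paper gives no independent proof of this lemma but simply invokes \cite[Lemma 8]{B} and asserts that Birman's proof---an induction over the $\Delta$-moves realizing the ambient isotopy, with a case analysis of how each triangle sits relative to the height function and the two boundary surfaces, normalized by spike and stabilization moves---carries over verbatim to genus $g$. Your sketch reproduces that strategy, including the one point the authors themselves single out (the locality of the moves and the path-connectedness of $C_n(\Sigma_g)$ being the only genus-sensitive ingredients), so it matches the paper's route; like the paper, it defers rather than re-derives the delicate exhaustiveness check in Birman's case analysis.
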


Consider the arcs $\gamma_1,\ldots,\gamma_n$ depicted in Figure~\ref{figstandard}. In order to get an algebraic equivalence among braids, we introduce some specific elements of $B_{g,2n}$ (see Figure~\ref{fig:f}):

\begin{itemize}
\item[-]  \emph{braid twists or intervals:} are braids exchanging the endpoints of an arc  $\gamma_i$;  in terms of the generators of $B_{g,2n}$ they are  the elements $\sigma_{2i-1}$, for $i=1,\ldots,n$; 

\item[-]  \emph{elementary exchanges of two arcs:} are braids exchanging two arcs $\gamma_i$ and $\gamma_{j}$; it is possible to write them as products of elementary  exchanges of neighborhood arcs, that is, as products of the elements $\sigma_{2i}\sigma_{2i+1}\sigma_{2i-1}\sigma_{2i}$, exchanging $\gamma_i$ and $\gamma_{i+1}$ for $i=1,\ldots, n-1$;

\item[-]  \emph{slides} of the  $i$-th arc: is a braid obtained by moving the  both the endpoints  $P_{2i-1}$ and $P_{2i}$ of an arc $\gamma_i$ along parallel paths; any slide of the $i$-th arc can be written as $c\sigma c^{-1}$, where $c$ is an elementary exchange taking the $i$-th arc into the first one and $\sigma$ is a slide of the first arc; moreover, a slide of the first arc can be written as a   product of  the following slides  
\begin{itemize}
\item[1)]  a slide under the second arc $\sigma_2\sigma_1^2\sigma_2$;
\item[2)]  a slide around the $j$-th longitude  $a_j\sigma_1^{-1}a_j\sigma_1^{-1}$;
\item[3)]  a slide around the $j$-th meridian $b_j\sigma_1^{-1}b_j\sigma_1^{-1}$.
\end{itemize}
\end{itemize}

\begin{figure}[h!]
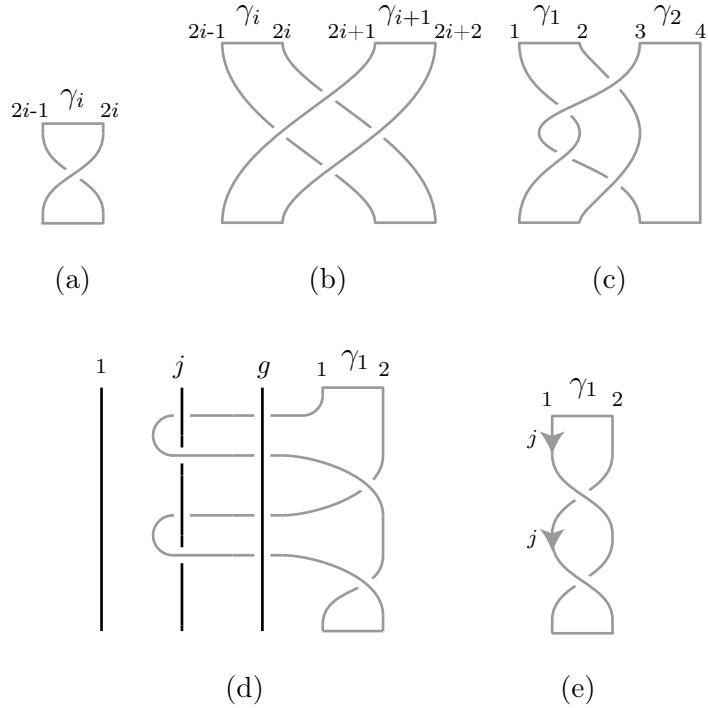

\centering

\begin{subfigure}[b]{.2\linewidth}\centering
\begin{overpic}[page=66]{images}
\put(-15,83){\msss{2i\mm 1}}\put(45,83){\msss{2i}}\put(18,93){$\gamma_i$}
\end{overpic}\caption{}\end{subfigure}
\
\begin{subfigure}[b]{.25\linewidth}\centering
\begin{overpic}[page=67]{images}
\put(-10,84){\msss{2i\mm 1}}\put(25,84){\msss{2i}}\put(10,92){$\gamma_i$}
\put(48,84){\msss{2i\pp 1}}\put(92,84){\msss{2i\pp 2}}\put(69,92){$\gamma_{i+1}$}
\end{overpic}\caption{}\end{subfigure}
\
\begin{subfigure}[b]{.25\linewidth}\centering
\begin{overpic}[page=68]{images}
\put(0,84){\msss{1}}\put(28,84){\msss{2}}\put(10,92){$\gamma_1$}
\put(52,84){\msss{3}}\put(77,84){\msss{4}}\put(60,92){$\gamma_2$}
\end{overpic}\caption{}\end{subfigure}\\[2em]
\
\begin{subfigure}[b]{.4\linewidth}\centering
\begin{overpic}[page=69]{images}
\put(2,89){\msss{1}}\put(27,89){\mss{j}}\put(54,89){\mss{g}}
\put(73,88){\msss{1}}\put(93,88){\msss{2}}
\put(81,92){$\gamma_1$}
\end{overpic}\caption{}\end{subfigure}
\
\begin{subfigure}[b]{.2\linewidth}\centering
\begin{overpic}[page=70]{images}
\put(2,87){\msss{1}}\put(27,87){\msss{2}}\put(12,92){$\gamma_1$}\put(-3,75){$_j$}\put(-3,39){$_j$}
\end{overpic}\caption{}\end{subfigure}

   \caption{(a) The braid twist of the $i$-th arc, (b) the elementary exchange of the $i$-th and $(i+1)$-th arc, (c) the slide of the fist arc under the second  one, (d) the slide of the fist arc around the $j$-th longitude, (e) the slide of the fist arc around the $j$-th meridian.}
\label{fig:f}
\end{figure}
 Note that the slide of the first arc under the $i$-th one is the product of $d\sigma_2\sigma_1^2\sigma_2d^{-1}$, where $d$ is an elementary exchange taking the $i$-th arc into the second  one. 

\begin{rem}
 Let $\textup{MCG}_{2n}(\Sigma_g):=\pi_0(\textup{Homeo}^+(\Sigma_g,\mathcal P_{2n}))$ and $\textup{MCG}(\Sigma_g)=\pi_0(\textup{Homeo}^+(\Sigma_g))$ and consider the exact sequence (see \cite{B1})

$$\cdots \to B_{g,2n}\to\textup{MCG}_{2n}(\Sigma_g)\to   \textup{MCG}(\Sigma_g)\to 1.$$

The image of the above defined  elements of $B_{g,2n}$   belong to the Hilden braid group $\textup{Hil}^g_n\subset \textup{MCG}_{2n}(\Sigma_g)$ introduced in \cite{BC}. Such subgroup can be characterized as that containing the  elements admitting an extension to the couple $(H,\mathcal A)$, where $H$ is the handlebody corresponding to the system of curves depicted in  Figure~\ref{figstandard} and $\mathcal A$ is a system of trivial arcs properly embedded in $H$ and  projecting onto $\{\gamma_1,\ldots, \gamma_n\}$.  
\end{rem}

We are ready to state the main theorem of this section.

\begin{teo}\label{thm:markov-sigma}
 Two elements of $\cup_{n\in\mathbb N} B_{g,2n}$ determine, via plat closure, equivalent links in $\Sigma_g\times I$ if and only if they are connected by a finite sequence of the following moves:
\begin{align*}
  (M1)  & \qquad \sigma_1\beta \longleftrightarrow  \beta  \longleftrightarrow \beta \sigma_1\\
  (M2)  & \qquad \sigma_{2i}\sigma_{2i+1}\sigma_{2i-1}\sigma_{2i} \beta \longleftrightarrow \beta \longleftrightarrow \beta \sigma_{2i}\sigma_{2i+1}\sigma_{2i-1}\sigma_{2i} \\
  (M3)  & \qquad \sigma_2\sigma_1^2\sigma_2\beta \longleftrightarrow  \beta  \longleftrightarrow \beta \sigma_2\sigma_1^2\sigma_2\\
   (M4)  & \qquad a_j\sigma_1^{-1}a_j\sigma_1^{-1}\beta \longleftrightarrow \beta\longleftrightarrow \beta a_j\sigma_1^{-1}a_j\sigma_1^{-1}\quad \textup{for } j=1,\ldots, g \\
  (M5) &\qquad b_j\sigma_1^{-1}b_j\sigma_1^{-1}\beta \longleftrightarrow \beta\longleftrightarrow \beta b_j\sigma_1^{-1}b_j\sigma_1^{-1}\quad \textup{for } j=1,\ldots, g \\
  (M6) &\qquad \beta \longleftrightarrow T_k(\beta)\sigma_{2k}\\
  \ &\ \ \quad \textup{ where } T_k: B_{g,2n}\to B_{g, 2n+2} \textup{ is  defined by } 
   T_k(a_i)=a_i, \ T_k(b_i)=b_i \textup{ and }\\
\ & \ \ \quad T_k(\sigma_i)=\left\{\begin{array}{l}\sigma_i \qquad \qquad \qquad \qquad \qquad \qquad\quad\textup{if } i<2k\\
                          \sigma_{2k}\sigma_{2k+1}\sigma_{2k+2}\sigma_{2k+1}^{-1}\sigma_{2k}^{-1}\ \qquad \qquad \textup{if } i=2k\\
                          \sigma_{i+2} \qquad \qquad \qquad \qquad \qquad \qquad \textup{if } i>2k\\
                         \end{array}\right..
  \end{align*}
\end{teo}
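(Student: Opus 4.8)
The plan is to establish the theorem as a translation of Lemma~\ref{lemma_birman} from the geometric language of links in standard position to the algebraic language of braids. The backward direction (sufficiency) is the routine half: I would verify that each move $(M1)$--$(M6)$, applied to a braid $\beta$, yields a braid whose plat closure is isotopic to $\widehat\beta$. For $(M1)$--$(M5)$ this amounts to checking that the elements $\sigma_1$, $\sigma_{2i}\sigma_{2i+1}\sigma_{2i-1}\sigma_{2i}$, $\sigma_2\sigma_1^2\sigma_2$, $a_j\sigma_1^{-1}a_j\sigma_1^{-1}$, and $b_j\sigma_1^{-1}b_j\sigma_1^{-1}$ are exactly the braid twist, the elementary exchange, and the three types of slides described just before the theorem; since each of these lies in the Hilden group $\textup{Hil}^g_n$ and hence extends over the trivializing discs, pre- or post-composing $\beta$ by such an element does not change the plat closure up to isotopy. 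For $(M6)$ I would exhibit the geometric picture showing that $T_k(\beta)\sigma_{2k}$ is the result of a stabilization move (introducing a small cup--cap pair at the $k$-th arc), with the definition of $T_k$ recording how the two new strands are inserted and how existing crossings $\sigma_i$ must be relabelled/conjugated past them.

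For the forward direction (necessity) I would start from two braids $\beta,\beta'$ whose plat closures $\widehat\beta$, $\widehat{\beta'}$ are equivalent links in $\Sigma_g\times I$, both in standard position. By Lemma~\ref{lemma_birman} there is a finite sequence of spike and stabilization moves connecting $\widehat\beta$ to $\widehat{\beta'}$. The strategy is then to realize each such geometric move as one of the algebraic moves $(M1)$--$(M6)$ (or a finite composition of them, together with the defining relations of $B_{g,2n}$). A stabilization move increases or decreases the number of boundary points by one and corresponds directly to $(M6)$. The content of the argument is therefore concentrated in translating the spike move: retracting a spike at a boundary point $B$ and emitting a new one at another boundary point $B'$ on the same component. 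I would analyze this by first reducing to the case where $B'$ is an \emph{adjacent} boundary point (an arbitrary relocation being a composition of adjacent ones, giving rise to the exchange moves $(M2)$), and then classifying the elementary ways the new spike can thread through the surface: either it passes a braid twist ($(M1)$), or it slides under a neighboring arc ($(M3)$), or it winds around a longitude or meridian of $\Sigma_g$ ($(M4)$, $(M5)$).

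The technical heart, and the step I expect to be the main obstacle, is precisely this last bookkeeping: showing that \emph{every} spike move decomposes as a product of the listed slide-and-exchange generators. Geometrically a spike emitted at $B'$ may wander arbitrarily through $\Sigma_g\times I$ before returning, so one must argue that, up to isotopy rel the rest of the link and modulo the braid relations $(R1)$--$(TR)$, such a path is homotopic to a concatenation of the four elementary slides (under an arc, around each longitude, around each meridian) together with arc exchanges. This is exactly the point where the generating set of $B_{g,2n}$ from \cite{Be} and the characterization of $\textup{Hil}^g_n$ enter: the slides listed in moves $(M3)$--$(M5)$ are designed to generate all slides of the first arc, and conjugation by exchanges (move $(M2)$) propagates this to every arc. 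I would therefore phrase the core lemma as: the subgroup of $B_{g,2n}$ generated by the braid twists, the elementary exchanges, and the three first-arc slides coincides with the image of $\textup{Hil}^g_n$, so that a spike move alters $\beta$ only by right-multiplication by such an element; combining this with the relabelling of $(M6)$ for stabilizations then closes the argument.

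Since the narrative explicitly says that ``the proof given in \cite{B} works almost without changes,'' I would keep the exposition at the level of indicating how Birman's genus-zero translation of spike moves (into $(M1)$) and stabilizations is augmented by the genus-$g$ slides $(M2)$--$(M5)$, and I would defer the most delicate isotopy analysis to the already-cited Lemma~\ref{lemma_birman}, presenting the remainder as the verification that each geometric move has the claimed algebraic avatar.
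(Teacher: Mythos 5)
Your overall skeleton matches the paper's: reduce everything to Lemma~\ref{lemma_birman}, show that a stabilization becomes an $(M6)$ move, and show that a spike move becomes a product of $(M1)$--$(M5)$. The problem is at the one step you yourself flag as the technical heart --- why \emph{every} spike move factors through the listed generators. There you substitute an unproven lemma that is both differently posed and stronger than what is needed, namely that the subgroup of $B_{g,2n}$ generated by twists, exchanges and first-arc slides ``coincides with the image of $\textup{Hil}^g_n$.'' As stated this does not typecheck ($\textup{Hil}^g_n$ lives in $\textup{MCG}_{2n}(\Sigma_g)$, not in $B_{g,2n}$), and in any sensible formulation it is essentially the open question from \cite{BC} that motivated the paper; the paper never claims it, and even if you had it you would still owe an argument that a spike move changes the braid by an element of that subgroup. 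The paper's proof avoids the Hilden group entirely: it splits the braid at a level $t_0$ below the spike, writes $\beta=\beta_1\beta_2$ and $\beta'=\beta_1'\beta_2$, and analyzes the correction term $\beta'\beta^{-1}=\beta_1'\beta_1^{-1}$ directly, observing (i) that its two strands issuing from the $i$-th pair of points cobound an embedded band with $\gamma_i\times\{1\}$ and one of the arcs $\gamma_j\times\{0\}$, and (ii) that it lies in the kernel of the strand-forgetting map $B_{g,2n}\to B_{g,2n-2}$. These two conditions exhibit it as a motion of a single arc in the complement of the others, hence as a product of slides, twists and exchanges. That is the argument you need to supply in place of the Hilden-group lemma (note also that for a spike at an upper boundary point the correction acts by left-, not right-, multiplication).

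A second, smaller omission: before invoking Lemma~\ref{lemma_birman} you must handle the fact that a link in standard position does not determine a unique braid --- the braid depends on the choice of the paths $p^\pm$ in $C_n(\Sigma_g)$ joining the boundary points of $L$ to the base points $B_1,\dots,B_n$. Two such choices differ by a loop in $\pi_1(C_n(\Sigma_g),\{B_1,\dots,B_n\})$, which is again a product of twists, exchanges and slides, i.e.\ of $(M1)$--$(M5)$ moves; without this the forward implication is not even well posed. Your normalization of the stabilization (its position, the sign of the twist, the over/under convention for the new strands) is likewise asserted rather than argued, but that part is genuinely routine and matches the paper's treatment.
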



\begin{proof}
Following the proof of Theorem  \ref{lemma:lins}, given   $L$  in standard position in order to find an element $\beta\in B_{g,2n}$ such that $\widehat \beta=L$, up to isotopy, we have to choose two paths $p^-$ and $p^+$ in $C_{n}(\Sigma_g)$ connecting, respectively, the set of lower boundary  points   of $L$ and the set of upper boundary points of $L$ to $\{B_1,\ldots, B_n\}$, where $B_i$ is an internal point of the arc $\gamma_i$, with $i=1,\ldots,n$. Clearly the choice of such paths is not unique: however if $p^-$ and $q^-$ are two possible choices for the set of lower boundary points, the composition of $p^-$ with the inverse of $q^-$ determines an element in $\pi_1(C_n(\Sigma_g), \{B_1,\ldots, B_n\})$ that could be realized as a composition of  braid twists, exchanges and slides  of the arcs $\gamma_i$. An analogous remark holds for upper boundary points. So two different elements of $B_{g,2n}$ obtained as above for the same link $L$ in standard position are connected by moves $(M1),\ldots, (M5)$.

By  Lemma~\ref{lemma_birman},  in order to prove the statement,  it is enough to describe how  a stabilization move and a spike move change a braid representative $\beta$  of a link $\widehat\beta$.

We start with the stabilization move. The effect of a stabilization move is to add a  trivial loop to the plat $\widehat\beta$ at any point. As depicted in Figure~\ref{fig:1}, by  ``sliding the stabilization'' along a connected component,  we may assume, up to spike moves, that the  stabilization is done at the bottom right of an even strand. We have four different possibilities on how to perform the stabilization move depending whether (i) we add a loop with a positive or negative twist and  (ii)  the new strands pass in front or behind the old ones. Up to spike moves, it is always possible to assume that  the twist is positive (see Figure~\ref{figstabnormd}) and  that the new strands pass in front of the old ones (see Figure~\ref{fig:3}). With these assumptions, it is straightforward to check that    the braid representative after the stabilization move will be $T_k(\beta)\sigma_{2k}$ if the stabilization occurs on the $2k$-th strand,   that is the stabilization move corresponds  to a $(M6)$ move.

\begin{figure}[ht]
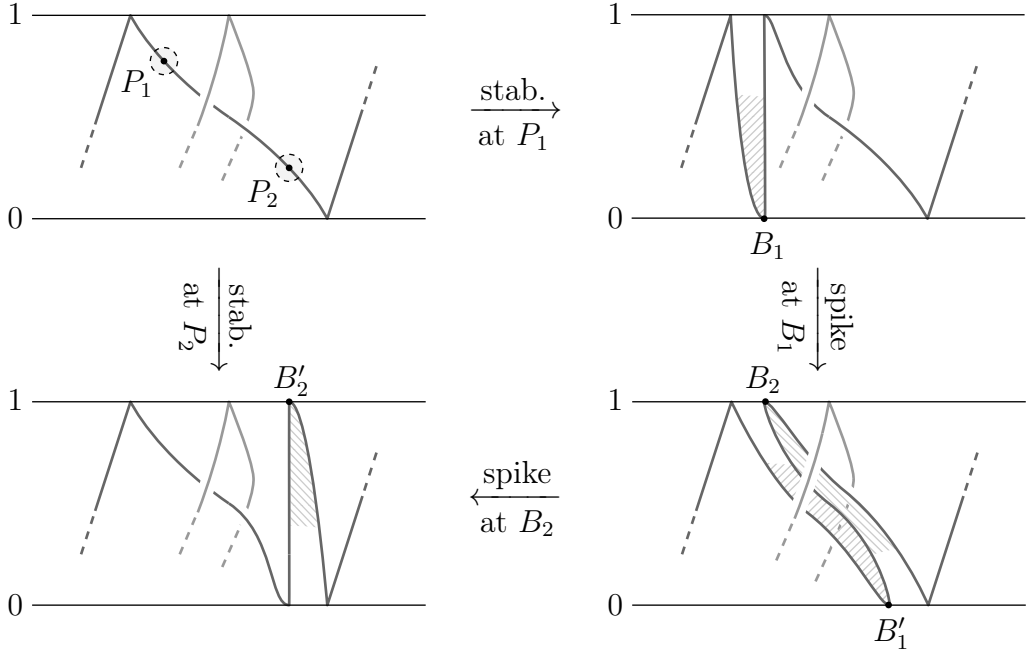

\centering
\begin{tabular}{ccc}
    \begin{overpic}[page=50]{images}
    \put(3,51.5){$1$}\put(3,5){$0$}\put(28.5,36){$P_1$}\put(57,11){$P_2$} 
    \end{overpic} &
    \raisebox{1.75cm}{$\xrightarrow[\mbox{at $P_1$}]{\makebox[1.0cm]{stab.}}$} &
    \begin{overpic}[page=51]{images}\put(3,51.5){$1$}\put(3,5){$0$}\put(35,-1){$B_1$}
    \end{overpic} \\
    \raisebox{1.5em}{\rotatebox{-90}{$\xrightarrow[\mbox{at $P_2$}]{\makebox[1.2cm]{stab.}}$}} &
     & \raisebox{1.5em}{\rotatebox{-90}{$\xrightarrow[\mbox{at $B_1$}]{\makebox[1.2cm]{spike}}$}}\\
    \begin{overpic}[page=52]{images}
    \put(3,51.5){$1$}\put(3,5){$0$}\put(63,56.5){$B_2'$}
    \end{overpic} &
    \raisebox{1.75cm}{$\xleftarrow[\mbox{at $B_2$}]{\makebox[1.0cm]{spike}}$} &
    \begin{overpic}[page=53]{images}\put(3,51.5){$1$}\put(3,5){$0$}\put(63,-1){$B_1'$}\put(34,56.5){$B_2$}
    \end{overpic}
\end{tabular}
        \caption{Normalizing the stabilization: always done  at the bottom right of an even strand.}
\label{fig:1}
\end{figure}

\begin{figure}[ht]
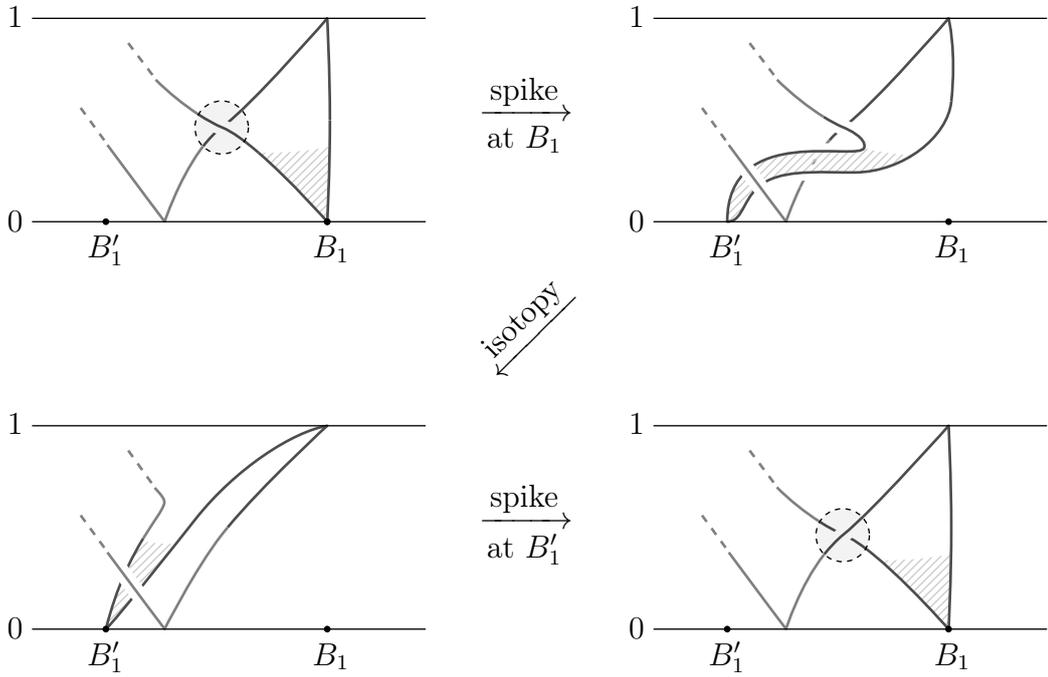

\centering
\begin{tabular}{ccc}
    \begin{overpic}[page=54]{images}
    \put(3,51.5){$1$}\put(3,5){$0$}\put(21,-1){$B_1'$}\put(72,-1){$B_1$}
    \end{overpic} &
    \raisebox{1.75cm}{$\xrightarrow[\mbox{at $B_1$}]{\makebox[1.0cm]{spike}}$} &
    \begin{overpic}[page=55]{images}\put(3,51.5){$1$}\put(3,5){$0$}\put(21,-1){$B_1'$}\put(72,-1){$B_1$}
    \end{overpic} \\
    & \rotatebox{45}{$\xleftarrow{\makebox[1.3cm]{isotopy}}$}& \\
    \begin{overpic}[page=56]{images}
    \put(3,51.5){$1$}\put(3,5){$0$}\put(21,-1){$B_1'$}\put(72,-1){$B_1$}
    \end{overpic} &
    \raisebox{1.75cm}{$\xrightarrow[\mbox{at $B_1'$}]{\makebox[1.0cm]{spike}}$} &
    \begin{overpic}[page=57]{images}\put(3,51.5){$1$}\put(3,5){$0$}\put(21,-1){$B_1'$}\put(72,-1){$B_1$}
    \end{overpic}
\end{tabular}
        \caption{Normalizing the stabilization: adding always a positive twist.}
\label{figstabnormd}
\end{figure}

\begin{figure}[ht]
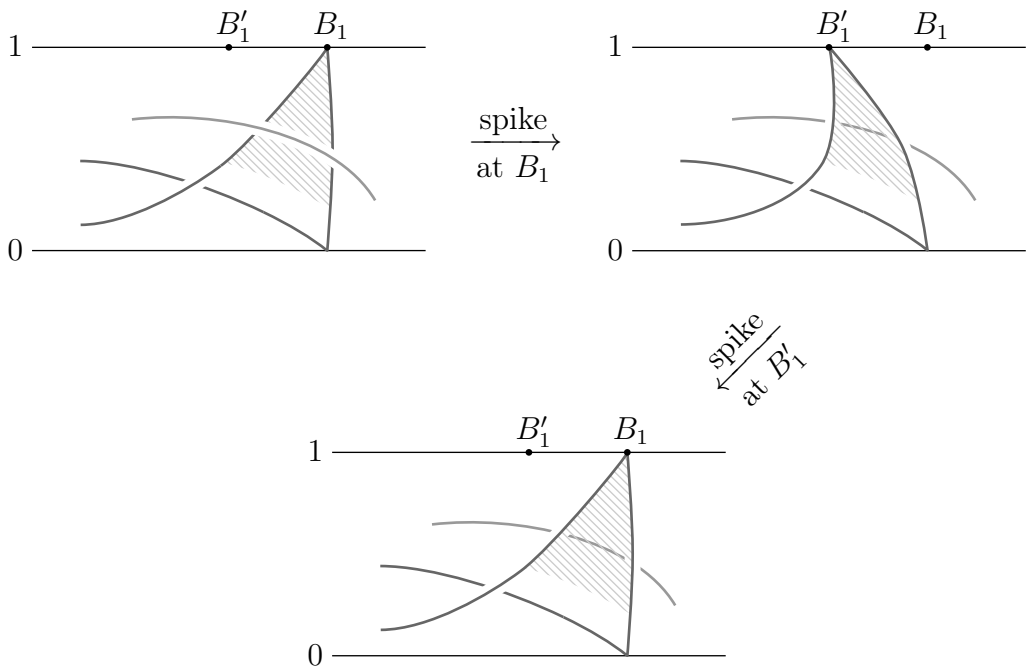

\centering
\begin{tabular}{ccc}
    \begin{overpic}[page=58]{images}
    \put(3,51.5){$1$}\put(3,5){$0$}\put(50,56.5){$B_1'$}\put(72,56.5){$B_1$}
    \end{overpic} &
    \raisebox{1.75cm}{$\xrightarrow[\mbox{at $B_1$}]{\makebox[1.0cm]{spike}}$} &
    \begin{overpic}[page=59]{images}\put(3,51.5){$1$}\put(3,5){$0$}\put(50,56.5){$B_1'$}\put(72,56.5){$B_1$}
    \end{overpic} \\ &
    & \lapbox[\width]{-2em}{\rotatebox{45}{$\xleftarrow[\mbox{at $B_1'$}]{\makebox[1.0cm]{spike}}$}} \\
    \multicolumn{3}{c}{
    \begin{overpic}[page=60]{images}    
    \put(3,51.5){$1$}\put(3,5){$0$}\put(50,56.5){$B_1'$}\put(72,56.5){$B_1$}
    \end{overpic}  }
\end{tabular}
         \caption{Normalizing the stabilization: the new strands pass always in front of the old ones.}
\label{fig:3}
\end{figure}

Let $\widehat\beta'$ be the plat obtained from $\widehat\beta$ by applying a spike move. Suppose that the spike move involves the $i$-th upper  boundary  point $B^+_i$, with $i\in\{1,\ldots,n\}$. We may assume that there exists  $t_0\in (0,1)$ such that the  segments  $[P,P']$ and   $[Q,Q']$ lie in $\Sigma_g\times \{t_0\}$ and that the surface $\Sigma_g\times \{t_0\}$ divides the braid $\beta$ into  an upper braid $\beta_1$ and a lower braid $\beta_2$, both contained in $B_{g,2n}$,  so  that $\beta=\beta_1\beta_2$.  Then there exists an element $\beta_1'\in B_{g,2n}$ such that   $\beta'=\beta'_1\beta_2$. Consider the element $\beta'\beta^{-1}=\beta_1'\beta_1^{-1}$ and denote by $(\beta'\beta^{-1})_{k}$   the $k$-th strand of the braid $\beta'\beta^{-1}$. It follows from the  definition of  spike move that (i) there exists an embedding of a band $I\times I$ into $\Sigma_g\times I$ whose boundary is given by $\gamma_i\times\{1\}\cup(\beta'\beta^{-1})_{2i-1}\cup  \gamma_{j}\times\{0\}\cup(\beta'\beta^{-1})_{2i}$ with  $i\in\{1,\ldots,n\}$ and (ii) $\beta'\beta$ is in the kernel of the map $B_{g,2n}\to B_{g,2n-2}$ obtained by forgetting the $(2i-1)$-th point and $2i$-th point (see Figure~\ref{fig:g}). This means exactly that the element $\beta'\beta^{-1}$ can be written as  a product of slides of arcs,  braid twists and exchanges  of arcs  that correspond to the moves $(M1),\ldots, (M5)$. An analogue reasoning holds in the case of a spike move on a lower boundary point. 

\begin{figure}[h!]
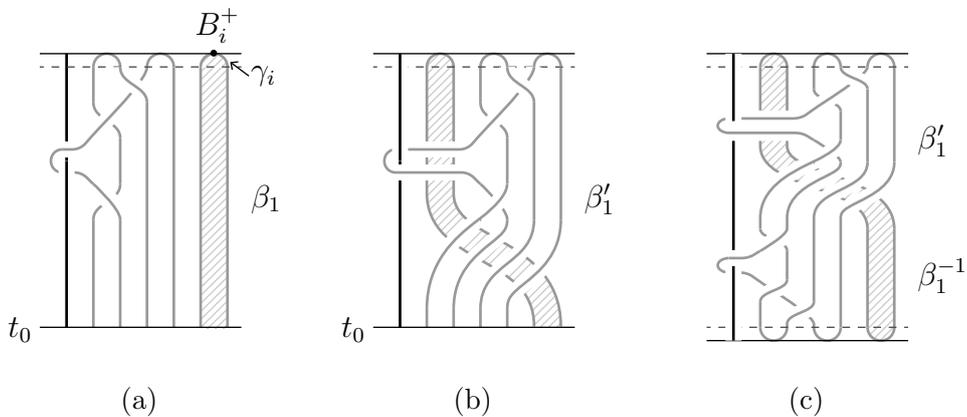

\centering

\begin{subfigure}[b]{.3\linewidth}\centering
\begin{overpic}[page=61]{images}
\put(-4,7){$t_0$}\put(53,100){$B_i^+$}\put(70,86){$\gamma_i$}\put(70,47){$\beta_1$}
\end{overpic}\caption{}\end{subfigure}
\
\begin{subfigure}[b]{.3\linewidth}\centering
\begin{overpic}[page=62]{images}
\put(-4,7){$t_0$}\put(70,47){$\beta_1'$}
\end{overpic}\caption{}\end{subfigure}
\
\begin{subfigure}[b]{.3\linewidth}\centering
\begin{overpic}[page=63]{images}
\put(70,65){$\beta_1'$}\put(70,23){$\beta_1^{-1}$}
\end{overpic}\caption{}\end{subfigure}
\
\caption{Braid interpretation of the spike move.}
\label{fig:g}
\end{figure}
\end{proof}

Given two braids we say that they are \emph{$\Sigma$-equivalent} if it is possible to connect them by a finite sequence of the six \emph{$M$-moves} $(M1),\ldots, (M6)$.

\end{section}

\begin{section}{The slide move and the Markov theorem}
\label{slide}

In this section we establish the main result of the paper,  that is, we describe the moves connecting braids representing isotopic links, by adding slide like moves to $\Sigma$-equivalence. \\

Let $L_1$ and $L_2$ be two disjoint links in $M$ and let $b \cong I\times I$ be an embedded band in $M$, such that
$b \cap L_1  = e_1 \cong I \times \{ 0 \}$ and $b \cap L_2 = e_2  \cong I \times \{ 1 \}$. The \emph{band connected sum} of $L_1$ and $L_2$ along $b$ is the link 
$$(L_1 - e_1) \cup (L_2 - e_2) \cup \overline{(\partial b - e_1 - e_2)},$$
denoted by $L_1 \#_b L_2$.



\begin{rem}\label{remark:sum}

In general the band connected sum $L_1 \#_b L_2$ depends on the choice of the band $b$.
For an oriented split link $L_1 \cup L_2$ in $S^3$, 
 such that the splitting sphere intersects $b$ transversally in a single arc, we can argue by the lightbulb trick (see~\cite{rolfs}) that the band connected sum is independent on the choice of $b$ (up to the choice of components to which $b$ connects).
For a general 3-manifold $M$, a sufficient condition for the band connected sum to be independent of $b$ (up to the choice of the component in $L_1$ to which $b$ connects) is that $L_2$ is an unknot contained inside a 3-ball $B^3$, that is disjoint from $L_1$, and $b$ does not intersect the open disk which $L_2$ bounds. We say that such a band $b$ is \emph{unlinked} with $L_2$.

\end{rem}
Next we define a connected sum operation $\alpha \# \beta$ for braids $\alpha$ and $\beta$ so that it holds $\widehat{\alpha \# \beta} = \widehat{\alpha} \#_b \widehat{\beta}$ for some band $b$.

Let $\alpha \in B_{g,2m}$ and $\beta \in B_{g,2n}$ be two braids.
The \emph{plat sum} of $\alpha$ and $\beta$ is the operation
$$\alpha \# \beta := \alpha\, w_{m,n} \,\beta \; \in B_{g,2(m+n-1)},$$
where $$w_{m,n} = \prod_{i=0}^{2m-3} \prod_{j=0}^{2n-3} \sigma_{2m-i+j},$$
see Figure~\ref{fig:connected_sum} for a geometric interpretation.

\begin{figure}[ht]
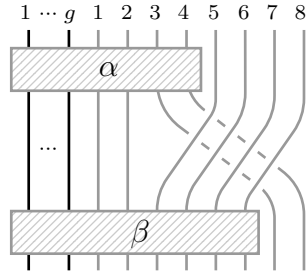

\centering
	\begin{overpic}[page=15]{images}
	 \put(31,64){$\alpha$}\put(41,11){$\beta$}
	 \put(12,39){$\scriptstyle\cdots$}\put(12,82){$\scriptstyle\cdots$}
	 \put(6,82){$\scriptstyle 1$}\put(20,82){$\scriptstyle g$}
	 \put(29,82){$\scriptstyle 1$}\put(38,82){$\scriptstyle 2$}\put(47,82){$\scriptstyle 3$}
	 \put(56,82){$\scriptstyle 4$}\put(65.5,82){$\scriptstyle 5$}\put(75,82){$\scriptstyle 6$}
	 \put(84,82){$\scriptstyle 7$}\put(93,82){$\scriptstyle 8$}
	 \end{overpic}
        \caption{The plat sum $\alpha \# \beta$.}
\label{fig:connected_sum}
\end{figure}

Let again $\Sigma_g$ be a genus $g$ Heegaard surface of $M$ and $\mathbf{c} = \{c_1, \ldots, c_g\}$ the collection of attaching circles in  $\Sigma_g\times \{1\}$ and $\mathbf{ c^*} =\{c^*_1,\ldots, c^*_g\}$ the collection of dual attaching circles in $\Sigma_g\times \{0\}$.

If we approach the attaching region of the $i$-th 2-handle in $\Sigma_g \times \{1\}$  with an arc of a link  $L\subset \Sigma_g\times I$, we can slide the arc along the 2-handle, which has the effect of making a connected sum with the attaching circle $c_i$ by a small band $b$:
$$sl_i: L \lra L \#_b c_i, \quad i = 1, \ldots, g.$$
We call this operation the \emph{$i$-th slide move}.
Similarly, if we approach the attaching region of the dual $i$-th $2$-handle in $\Sigma_g \times \{0\}$, this gives rise to the \emph{$i$-th dual slide move}:
$$sl^*_i: L \lra L \#_b c^*_i, \quad i = 1, \ldots, g.$$

Both types of slide moves are isotopy moves in $M$, since all $c_i$ and $c_i^*$ bound (meridian) discs in $M$ and  thus are trivial knots in $M$.

\begin{lemma}\label{lemma:twostrands}
The curves $\mathbf{c}$ and dual curves $\mathbf{c^*}$ can be expressed as closed plats with two strands.
\end{lemma}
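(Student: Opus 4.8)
The plan is to reduce the statement to a single elementary fact: every simple closed curve on the Heegaard surface $\Sigma_g$ has genus $g$ bridge number at most one. By construction the members of $\mathbf{c}$ and of $\mathbf{c}^*$ are simple closed curves lying on $\Sigma_g$ (the attaching, respectively dual attaching, circles), and via the product structure of a collar of $\Sigma_g$ we may regard each of them as a knot in $\Sigma_g\times I$. Once we know such a knot admits a $(g,1)$-decomposition, the correspondence between bridge decompositions and plat closures recalled in Section~\ref{Heegaard}, and made explicit by the braiding process in the proof of Theorem~\ref{lemma:lins}, immediately exhibits it as the plat closure of a braid in $B_{g,2}$, that is, as a closed plat with two strands.

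To produce the $(g,1)$-decomposition I would argue directly. Fix one of the curves, call it $\gamma$, and choose two points $x,y\in\gamma$ splitting it into two arcs $\alpha^{+}$ and $\alpha^{-}$. Working inside a collar $\Sigma_g\times[-1,1]$ of the Heegaard surface, with $\Sigma_g\times[0,1]$ contained in one handlebody and $\Sigma_g\times[-1,0]$ in the other, I would push $\alpha^{+}$ slightly into the upper handlebody and $\alpha^{-}$ slightly into the lower one, keeping the two points $x,y$ fixed on $\Sigma_g$. The resulting curve meets $\Sigma_g$ transversally in the two points $x,y$, it meets the upper handlebody in the single arc $\alpha^{+}$ and the lower one in the single arc $\alpha^{-}$, and each of these arcs is boundary parallel (it cobounds a ``pushing'' disc in the collar with its image on $\Sigma_g$) hence trivial. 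This is exactly a $(g,1)$-decomposition, equivalently a standard position with $n=1$: one maximum pushed to $\Sigma_g\times\{1\}$, one minimum pushed to $\Sigma_g\times\{0\}$, and two intersection points at every intermediate level.

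Since only the fact that $\gamma$ is a simple closed curve is used, the same construction applies uniformly to each $c_i$ and each $c_i^{*}$, regardless of how complicated the (manifold dependent) curves $c_i$ may be; this uniformity is the only delicate point, as the $c_i$ are not standard. Reading off the braid from the monotone middle part of the curve then yields the explicit two strand words: for the standard dual circles one recognises, for instance, the meridian $c_j^{*}$ as the plat closure of $b_j$ and the corresponding standard longitude as the plat closure of $a_j$, while a general $c_i$ is encoded by the word in the generators $a_1,b_1,\dots,a_g,b_g$ tracing its isotopy class. The main obstacle is thus not the existence of the two strand plat, which the pushing argument settles at once, but the bookkeeping that carries the two intersection points to the standard endpoints $P_1,P_2$ of $\gamma_1$ in Figure~\ref{figstandard}; this is handled exactly as in the final normalization step of the proof of Theorem~\ref{lemma:lins}.
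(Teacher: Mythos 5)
Your argument is correct and is essentially the paper's own: the paper likewise exploits the fact that $c_i$ lies on a level surface of $\Sigma_g\times I$ to perturb it so that it has exactly one upward arc, hence a single maximum and a single minimum for the height function, after which the braiding process of Theorem~\ref{lemma:lins} yields a braid on two strands. Splitting the curve into two subarcs and pushing them to opposite sides of a collar (your $(g,1)$-decomposition) is the same monotonization in different packaging, so no further comment is needed.
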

\begin{proof}
Let $Q_1,\ldots, Q_k$ be the collection of consecutive vertices in a PL-decomposition of $c_i$. Since $c_i$ lies on $\Sigma_g \times \{ 1\}$,
all arcs of $c_i$ are horizontal with respect to the height function associated with $\Sigma_g \times I$. By a small perturbation we can isotope the points so that all arcs $[ Q_i, Q_{i+1} ]$ are oriented downwards and use a $\Delta$-move to replace the arc $[Q_k ,Q_1]$ with $[Q_k, Q_1']\cup [Q_1', Q_1]$, where $[Q_1', Q_1]$ is a vertical upward arc. By the braiding process described in the proof of Theorem~\ref{lemma:lins} (see also Figure~\ref{fig_alex}), a knot with only one upward arc can be braided with two strands.
An analogue construction can be made for $c_i^*$.  
\end{proof}

If $\beta \in \B_{g,2n}$ is a braid representative of $L$ and $\overline{c_i}$ (resp. $\overline{c^*_i}$) is a braid representative of $c_i$ (resp. $c_i^*$), then the slide move (resp. dual slide move) can be expressed in braid form by a plat connected sum as:
$$psl_i: \beta \lra \overline{c_i} \# \beta,\quad i = 1, \ldots, g,$$
which we call the \emph{$i$-th plat slide move} and 
$$psl_i^*: \beta \lra \beta \# \overline{c_i^*},\quad i = 1, \ldots, g,$$
which we name the \emph{$i$-th dual plat slide move}.
Since $c_i$ can be braided with two strands, both $psl_i(\beta)$ and $psl_i^*(\beta)$ are elements of $B_{g,2n}$. 

There are several ways we can slide an arc across a 2-handle. Since we are performing band sums with trivial knots, by Remark~\ref{remark:sum}, we have to check that the above plat slide moves include band connected sums where the band starts at any position of $\widehat{\beta}$ and any position of $c_i$ (resp. $c_i^*$), assuming that  the band  $b$ is unlinked with $c_i$ (resp. $c_i^*$). The following lemma shows that, up to $\Sigma$-equivalence, plat slide moves include band connected sums where the band starts at any position of $\widehat{\beta}$. 

\begin{lemma}\label{lemma:top}
Given a plat $\widehat\beta$,  a  plat slide move (resp. dual plat slide move) can be always assumed to take place on top left strand (resp. bottom left strand) of $\beta$  as represented in Figure~\ref{fig:proofcb1}. That is, for any braid $\beta$ and  any band $b$ starting from an arbitrary point in $\widehat \beta$, arbitrary linked with $\widehat \beta$  and connected to $c_i$ (resp. $c_i^*)$,  there exists a $\Sigma$-equivalent braid $\beta'$ such that $  c_i \#_b \widehat\beta $ is isotopic to $ \widehat{\overline{c_i} \# \beta'}$ (resp. $\widehat  \beta \#_b c^*_i $ is isotopic to $\widehat{\beta' \# \overline{c^*_i}}$). 

\end{lemma}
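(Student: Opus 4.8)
The plan is to reduce an arbitrary slide to the standard one in two stages: first I would use the equivalence in the thickened surface (Theorem~\ref{thm:markov-sigma}) to bring the component meeting the band into the top-left position, and then I would use the fact that band sums with a trivial knot are essentially rigid (Remark~\ref{remark:sum}) to forget the actual routing of the band. Throughout I will treat only the plat slide move; the dual plat slide move follows by reflecting the whole picture across $\Sigma_g\times\{1/2\}$.

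First I would fix the geometry. Since $c_i$ bounds a meridian disc $D_i$ in the $2$-handle glued along $c_i$, I can isotope $\widehat\beta$ into $\Sigma_g\times[0,1-\varepsilon]$ so that a small ball $B_i\supset c_i\cup D_i$ is disjoint from $\widehat\beta$; by hypothesis the band $b$ is unlinked with $c_i$, i.e.\ it misses the interior of $D_i$. Let $K$ be the component of $\widehat\beta$ carrying the foot of $b$. By Lemma~\ref{lemma:twostrands} the braid representative $\overline{c_i}$ exists, and the target braid $\overline{c_i}\#\beta'$ attaches $c_i$, via a standard small band $b_0$, to the top-left strand of $\widehat{\beta'}$; so I need the component through the top-left cap $\gamma_1\times\{1\}$ of $\widehat{\beta'}$ to be the image of $K$.

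Next comes the $\Sigma$-equivalence step. To arrange the above I would isotope $\widehat\beta$ inside $\Sigma_g\times I$, dragging the foot of $b$ along $K$ up to a top cap and then permuting that cap into the first position using elementary exchanges, together with a braid twist if the foot lands on the wrong strand of the pair. This isotopy is supported in the interior, away from $B_i$, so it carries $c_i$ rigidly and keeps $b$ unlinked with $c_i$; since it takes one plat to another within $\Sigma_g\times I$, Theorem~\ref{thm:markov-sigma} guarantees that the resulting braid $\beta'$ is $\Sigma$-equivalent to $\beta$, the repositioning being realized explicitly by moves $(M1)$ and $(M2)$. Finally I would discard the routing of the band: after the previous step both the transported band $b$ and the standard band $b_0$ join $c_i$ to the same component of $\widehat{\beta'}$ and are unlinked with $c_i$, so since $c_i$ is an unknot lying in the ball $B_i$ disjoint from $\widehat{\beta'}$, Remark~\ref{remark:sum} yields $c_i\#_b\widehat{\beta'}\cong c_i\#_{b_0}\widehat{\beta'}=\widehat{\overline{c_i}\#\beta'}$. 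Composing with the isotopy of the previous step gives $c_i\#_b\widehat\beta\cong\widehat{\overline{c_i}\#\beta'}$, as in Figure~\ref{fig:proofcb1}.

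The main obstacle I anticipate is the interface between the two stages. I must be sure that the surface isotopy used to normalize the attaching position can be taken with support disjoint from the ball $B_i$, so that $c_i$ and its disc are not disturbed and unlinkedness with $c_i$ is preserved, and that the permutation of the top caps needed to move $K$ into the leftmost position is genuinely realizable by exchanges and twists for every component pattern of the plat. Once these two points are in place, the rigidity statement of Remark~\ref{remark:sum} does the rest, and the detailed, possibly heavily linked, embedding of the band $b$ with respect to $\widehat\beta$ never has to be analysed.
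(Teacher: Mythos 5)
Your first two steps (normalizing the foot of the band on $\widehat\beta$ by an isotopy in $\Sigma_g\times I$ and invoking Theorem~\ref{thm:markov-sigma}) are fine. The gap is in the final step, where you invoke Remark~\ref{remark:sum} to replace the arbitrarily routed band $b$ by the standard small band $b_0$. The independence of the band sum from $b$ asserted in that remark is achieved by an isotopy of $M$ that sweeps the arc $c_i-e_2$ across the meridian disc $D_i$ --- that is, across the $2$-handle, outside $\Sigma_g\times I$; it is precisely a (pair of) slide moves, not an isotopy of the thickened surface. If isotopies of that kind are allowed at this stage, the lemma trivializes: the same remark gives $c_i\#_b\widehat\beta\cong\widehat\beta$ in $M$, so you could take $\beta'=\beta$ and dispense with the plat slide move altogether, which is absurd since $psl_i$ genuinely changes the braid (e.g.\ it multiplies by $\alpha_{p,q}$ in a lens space). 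For the Markov theorem the conclusion must be an isotopy realizable in $\Sigma_g\times I$ (equivalently, a $\Sigma$-equivalence of braid representatives), because an isotopy in $M$ would have to be re-decomposed into further slides and the induction in Theorem~\ref{thm:markov} would not close. And in $\Sigma_g\times I$ the links $c_i\#_b\widehat{\beta'}$ and $c_i\#_{b_0}\widehat{\beta'}$ are in general \emph{not} isotopic when $b$ winds around handles of $\Sigma_g$ or links with $\widehat{\beta'}$, so the routing of $b$ cannot simply be forgotten.

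The paper's proof avoids this by absorbing the band into the braid rather than discarding it: one cuts $b$ just before it reaches $c_i$, observes that the link with the resulting long ``tongue'' retracts onto $\widehat\beta$ inside $\Sigma_g\times I$, and re-braids it via the braiding process of Theorem~\ref{lemma:lins} so that the tip of the tongue becomes the top-left cap. The resulting $\beta'$ is $\Sigma$-equivalent to $\beta$ by Theorem~\ref{thm:markov-sigma}, but --- unlike your $\beta'$, which is only a permutation of the caps of $\beta$ --- it records how $b$ links with $\widehat\beta$ and winds around $\Sigma_g$; the remaining band sum is then literally the standard plat sum $\overline{c_i}\#\beta'$, with no further passage through the handle needed. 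To repair your argument you would need to replace the appeal to Remark~\ref{remark:sum} by this absorption step, since the claim that the two band sums agree inside $\Sigma_g\times I$ is false in general.
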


\begin{proof}
We prove the statement in  the non-dual case.
As the band $b$ approaches $c_i$ (Figure~\ref{fig:proofcb2}) we can cut $b$ and obtain a link $\widehat{\beta'}$ isotopic to $\widehat \beta$ (Figure~\ref{fig:proofcb3}). We use the braiding process described  the proof of Theorem~\ref{lemma:lins} (see also Figure~\ref{fig_alex}) and put the new link in plat position with  the braid $\beta'$ being $\Sigma$-equivalent to $\beta$ and having the connecting arc at the top left.

\end{proof}

\begin{figure}[ht]
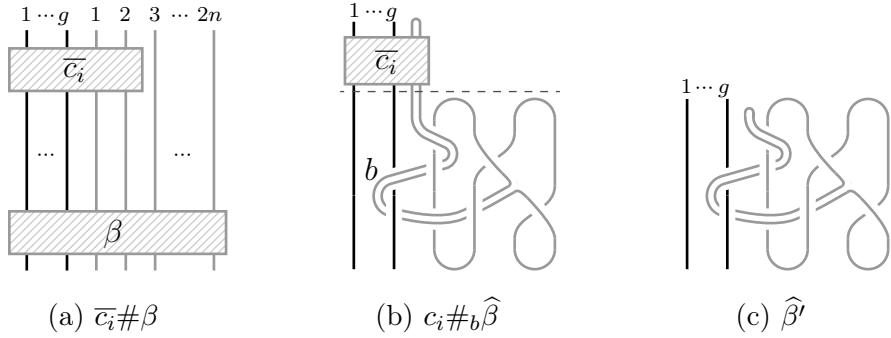

\centering
\begin{subfigure}[b]{0.2\textwidth}    
\centering
	\begin{overpic}[page=20]{images}
	\put(14,44){$\scriptstyle\cdots$}\put(14,96){$\scriptstyle\cdots$}
	 \put(8,96){$\scriptstyle 1$}\put(22,96){$\scriptstyle g$}\put(34,96){$\scriptstyle 1$}\put(45,96){$\scriptstyle 2$}\put(56,96){$\scriptstyle 3$}\put(74,96){$\scriptstyle 2n$}\put(64,96){$\scriptstyle\cdots$}\put(65,44){$\scriptstyle\cdots$}
	 \put(24,75){$\overline{c_i}$}\put(39,13){$\beta$}\end{overpic}	
        \caption{$\overline{c_i} \# \beta$}
        \label{fig:proofcb1}
    \end{subfigure}\qquad\qquad
\begin{subfigure}[b]{0.2\textwidth}    
\centering
	\begin{overpic}[page=21]{images}\put(16,79){$\overline{c_i}$}\put(12,37){$b$}
	\put(12,99){$\scriptstyle\cdots$}\put(6,99){$\scriptstyle 1$}\put(20,99){$\scriptstyle g$}
	 \end{overpic}	
        \caption{$c_i \#_b \widehat{\beta}$}
        \label{fig:proofcb2}
    \end{subfigure}\qquad\qquad
\begin{subfigure}[b]{0.2\textwidth}    
\centering
	\begin{overpic}[page=22]{images}
	 \put(12,79){$\scriptstyle\cdots$}\put(6,79){$\scriptstyle 1$}\put(22,79){$\scriptstyle g$}
	 	 \end{overpic}	
        \caption{$\widehat{\beta'}$}
        \label{fig:proofcb3}
    \end{subfigure}
\caption{Normalizing the plat slide move.}
\label{fig:proofcb}
\end{figure}



Next lemma shows that, up to $\Sigma$-equivalence,  the plat slide moves $psl_i$ (resp. $psl_i^*$) do  not depend on the point where the band $b$  is attached   to $c_i$ (resp. $c_i^*$).

\begin{lemma}\label{lemma:c}



Let $c_i \#_b \widehat{\beta}$ (resp. $\widehat{\beta} \#_b c_i^*$) be the band connected sum, where $b$ is unlinked with $c_i$ (resp. $c_i^*$) and connected to $c_i$ (resp. $c_i^*$) at a small arc $e_1$. Let $e_1'$ be another small arc on $c_i$ (resp. $c_i^*$), then there exists a braid $\beta'$ which is $\Sigma$-equivalent to $\beta$ and a band $b'$ which is unlinked with $c_i$ (resp. $c_i^*$) and connected to $c_i$ (resp. $c_i^*$) at $e_1'$, such that $c_i \#_b \widehat{\beta}$ is isotopic to $ c_i \#_{b'} \widehat{\beta'}$ (resp. $\widehat{\beta} \#_b  c_i^*$ is isotopic to $\widehat{\beta'} \#_{b'} c_i^*$).

\end{lemma}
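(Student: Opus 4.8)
The plan is to realize the change of attaching arc from $e_1$ to $e_1'$ as a single ambient isotopy of $\Sigma_g\times I$ that slides the foot of the band along the curve $c_i$, and then to invoke Theorem~\ref{thm:markov-sigma} to turn this isotopy into a $\Sigma$-equivalence of braids. As in Lemma~\ref{lemma:top}, it suffices to treat the non-dual case, the dual one being entirely symmetric. By Lemma~\ref{lemma:twostrands} the curve $c_i$ can be taken as a closed two-strand plat, so $c_i$, the plat $\widehat\beta$, and the band $b$ all lie in $\Sigma_g\times I$, and the band connected sum $c_i\#_b\widehat\beta$ is again a link in the thickened surface.

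First I would fix one of the two arcs of $c_i$ joining $e_1$ to $e_1'$ and regard the foot of the band as a short sub-arc of $c_i$ that is free to slide along it while $c_i$ stays fixed as a set, the ``bead on a wire'' picture. Sliding the foot from $e_1$ to $e_1'$ along this arc extends to an ambient isotopy $\Phi_t$ of $\Sigma_g\times I$ supported in a collar of $\Sigma_g\times\{1\}$ together with a neighbourhood of the band. Since the motion takes place on the $\Sigma_g\times I$ side of $\Sigma_g\times\{1\}$ and never enters the meridian disc bounded by $c_i$, the band stays embedded and unlinked with $c_i$ throughout, so $\Phi_1(b)=:b'$ is an admissible band attached at $e_1'$.

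The isotopy $\Phi_t$ fixes $c_i$ setwise while carrying $\widehat\beta$ to a new plat $\widehat{\beta'}$: the strands of $\widehat\beta$ that the sliding foot must cross are dragged along by $\Phi_t$, and it is exactly these interactions that account for the difference between $\beta$ and $\beta'$. By construction $\Phi_1$ sends $c_i\#_b\widehat\beta$ to $c_i\#_{b'}\widehat{\beta'}$, which gives the required isotopy of band sums. Finally, the restriction of $\Phi_t$ to $\widehat\beta$ is an isotopy inside $\Sigma_g\times I$ carrying $\widehat\beta$ to the plat $\widehat{\beta'}$, so Theorem~\ref{thm:markov-sigma} yields that $\beta$ and $\beta'$ are $\Sigma$-equivalent, as desired.

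The main obstacle will be the careful verification that the sliding isotopy can be performed while keeping the band embedded and unlinked with $c_i$ at every instant, and in particular controlling the over/under crossings created when the foot is dragged past arcs of $\widehat\beta$ or past $c_i$ itself. The key simplification is that once one checks that all such crossings occur within $\Sigma_g\times I$, Theorem~\ref{thm:markov-sigma} absorbs them automatically into the moves $(M1),\ldots,(M6)$, so no separate combinatorial bookkeeping of the crossings is needed. The dual statement then follows by applying the same argument in a collar of $\Sigma_g\times\{0\}$, with $c_i^*$ in place of $c_i$.
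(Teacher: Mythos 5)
Your proposal is correct and follows essentially the same route as the paper: both slide the foot of the band along $c_i$ (staying on the $\Sigma_g\times I$ side so the band remains unlinked) and absorb the resulting complications into the braid as a $\Sigma$-equivalence. The only difference is one of detail: the paper explicitly identifies the moves that arise when the sliding band crosses a lateral face of the fundamental polygon as instances of $(M4)$ or $(M5)$, whereas you delegate that bookkeeping wholesale to Theorem~\ref{thm:markov-sigma}, which is legitimate since that theorem is already established.
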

\begin{proof}We prove the statement in the non-dual case.
By Lemma~\ref{lemma:top} we can assume that $b$ is connected with the top-left arc of $\beta$.
Observe that we can slide the small arc $e_1$ together with the band $b$ along the knot $c_i$ towards $e_1'$ (see Figures~\ref{fig:proofc1} and~\ref{fig:proofc2} for the case $g=1$). In this process it can happen that the connecting band crosses a lateral surface of $G \times I$, where $G$ is the fundamental polygon of $\Sigma_g$. If we keep track of the braiding process before and after that the band crosses a lateral surface, we see that the two braids differ by either an $(M4)$ or an $(M5)$ move (see Figure~\ref{fig:proofc2}). 
When we reach $e_1'$, we can push the braiding of the band to the braid and the entire process gives rise to a braid $\beta'$ (see Figure~\ref{fig:proofc3}),
 which is $\Sigma$-equivalent to $\beta$ so it holds that $c_i \#_{b'} \widehat{\beta'}$ is isotopic to the original connected sum $c_i \#_b \widehat{\beta}$.


\end{proof}

\begin{figure}[ht]
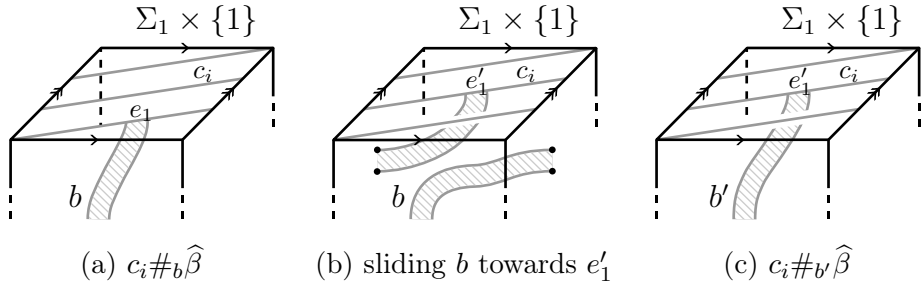

\centering
\begin{subfigure}[b]{0.3\textwidth}    
\centering
	\begin{overpic}[page=17]{images}\put(68,51){\mss{c_i}}\put(46,37.5){\mss{e_1}}
	\put(48,68){$\Sigma_1\times\{1\}$}\put(25,6){$b$}
	\end{overpic}	
        \caption{$c_i \#_b \widehat{\beta}$}
        \label{fig:proofc1}
    \end{subfigure}
   \begin{subfigure}[b]{0.3\textwidth}  
\centering
	\begin{overpic}[page=18]{images}\put(68,51){\mss{c_i}}\put(50.5,47.5){\mss{e_1'}}
	\put(48,68){$\Sigma_1\times\{1\}$}\put(25,6){$b$}
	 	 \end{overpic}	
        \caption{sliding $b$ towards $e_1'$}
        \label{fig:proofc2}
    \end{subfigure}
   \begin{subfigure}[b]{0.3\textwidth}    
\centering
	\begin{overpic}[page=19]{images}\put(68,51){\mss{c_i}}\put(50.5,47.5){\mss{e_1'}}
	\put(48,68){$\Sigma_1\times\{1\}$}\put(23,6){$b'$}
	\end{overpic}	
        \caption{$c_i \#_{b'} \widehat{\beta}$}
        \label{fig:proofc3}
    \end{subfigure}

\caption{Normalizing the plat slide move.}
\label{fig:proofc}
\end{figure}

We are ready to prove the main theorem of this paper.

\begin{teo}[Markov theorem]\label{thm:markov}
Let $L_1$ and $L_2$ be two links in $M$ such that $\widehat \beta_i=L_i$  with $\beta_i\in \cup_{n\in\mathbb N} B_{g,2n}$, with $i=1,2$. Then $L_1$ and $L_2$ are isotopic if and only if $\beta_1$ and $\beta_2$ differ by
a finite sequence of
braid isotopy moves~($R1$), $\dotso$, ($R4$), ($TR$); 
$M$-moves ($M1$), $\dotso$, ($M6$);
plat slide moves $psl_i$, $i=1,\ldots,g$ and dual plat slide moves $psl^*_i$, $i=1,\ldots,g$.
\end{teo}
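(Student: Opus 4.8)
The plan is to prove both implications, disposing of the easy ``if'' direction first. For the converse I would realize the ambient isotopy by $\Delta$-moves and split it into pieces living in $\Sigma_g\times I$, handled by Theorem~\ref{thm:markov-sigma}, and finitely many ``handle-crossing'' events, each of which is a slide move.

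\emph{The ``if'' direction.} Here I would check that each allowed move is induced by an ambient isotopy of $M$. The braid-isotopy moves $(R1)$--$(R4)$, $(TR)$ (together with the standard braid relations) identify two words representing the same element of $B_{g,2n}$, so their geometric braids are isotopic rel endpoints in $\Sigma_g\times I$ and their plat closures agree up to isotopy. For the $M$-moves $(M1)$--$(M6)$, Theorem~\ref{thm:markov-sigma} already gives that $\Sigma$-equivalent braids have isotopic plat closures in $\Sigma_g\times I$, and composing with the inclusion $\Sigma_g\times I\hookrightarrow M$ yields an isotopy in $M$. Finally, each $psl_i$ (resp. $psl_i^*$) replaces $\widehat\beta$ by $c_i\#_b\widehat\beta$ (resp. $\widehat\beta\#_b c_i^*$), and since $c_i$ and $c_i^*$ bound the core discs of the attached $2$-handles, sliding over the handle is an ambient isotopy of $M$, as observed after their definition.

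\emph{The ``only if'' direction.} I would first use Theorem~\ref{lemma:lins} to put $L_1$ and $L_2$ in standard position inside $\Sigma_g\times I\subset M$, and record an ambient isotopy $H$ from $L_1$ to $L_2$ as a finite sequence of $\Delta$-moves. The structural input is that $M$ is built from $\Sigma_g\times I$ by attaching $2$-handles along $\mathbf c\times\{1\}$ and $\mathbf c^*\times\{0\}$ and capping with two $3$-handles: a link disjoint from the (one-dimensional) cocore arcs of all these $2$-handles can be radially pushed off the handles into the collar $\Sigma_g\times I$, and a link meeting the interior of a $3$-handle can always be pushed out through its boundary sphere. I would therefore put the trace of $H$ in general position with respect to the cocore arcs and the $3$-handle balls, so that the link avoids the $3$-handles at all times and crosses the cocore arcs transversally at finitely many distinct critical times $t_1<\dots<t_m$. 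On each complementary subinterval the link stays isotopic to a link in $\Sigma_g\times I$; choosing a standard-position snapshot with a fixed braiding at the endpoints and applying Lemma~\ref{lemma_birman} and Theorem~\ref{thm:markov-sigma}, the two braid representatives are $\Sigma$-equivalent, i.e. connected by $(M1)$--$(M6)$ and braid relations. At each $t_j$ the link crosses the cocore of the $i$-th $2$-handle (or of a dual handle): pushing the configuration just after the crossing back into the collar forces the relevant arc to wrap once around the core disc, i.e. to band-sum with $c_i$ (resp. $c_i^*$), which is exactly a slide move $sl_i$ (resp. $sl_i^*$). Using Remark~\ref{remark:sum} to take the band unlinked with $c_i$, Lemma~\ref{lemma:top} to normalize it to the top-left (resp. bottom-left) strand, and Lemma~\ref{lemma:c} to make the outcome independent of the attaching arc on $c_i$, this crossing is realized, up to $\Sigma$-equivalence, by $psl_i$ (resp. $psl_i^*$). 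Concatenating the $\Sigma$-equivalences on the subintervals with the slide moves at the critical times connects $\beta_1$ to $\beta_2$ by the listed moves.

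\emph{Main obstacle.} The delicate part is the general-position and normalization argument: one must ensure that $H$ can be perturbed so that handle crossings are isolated and ``clean'' (a single cocore crossed at a time, with the link returning to $\Sigma_g\times I$ in between), that each crossing is genuinely one slide rather than a more complicated threading through several handles, and that the band produced --- with its a priori arbitrary starting point on $\widehat\beta$, arbitrary linking, and arbitrary attaching arc on $c_i$ --- is absorbed into the standard plat slide move by $\Sigma$-equivalence via Lemmas~\ref{lemma:top} and~\ref{lemma:c} and Remark~\ref{remark:sum}.
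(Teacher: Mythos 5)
Your proposal is correct and follows essentially the same route as the paper: decompose the ambient isotopy into isotopies within $\Sigma_g\times I$ (handled by Theorem~\ref{thm:markov-sigma}) plus finitely many slides across the $2$-handles and dual $2$-handles, then normalize each slide to a $psl_i$ or $psl_i^*$ via Remark~\ref{remark:sum} and Lemmas~\ref{lemma:top} and~\ref{lemma:c}. The paper states the handle-decomposition step in one sentence, whereas you justify it with the general-position argument on cocore arcs, which is a welcome elaboration rather than a departure.
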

\begin{proof}
An isotopy between  two links in $M$ can be obtained by an isotopy in $\Sigma_g \times I$ with the additional freedom to slide across the 2-handles described by $\mathbf{c}$ and the dual 2-handles described by $\mathbf{c^*}$ (sliding across 0-handles and 3-handles is of course trivial). Isotopy is thus, by Theorem~\ref{thm:markov-sigma}, described by the $M$-moves and slide moves along meridian discs. With Lemmas~\ref{lemma:top} and~\ref{lemma:c} we have shown that it is enough to consider only the plat slide move $psl_i$  for each 2-handle and the dual plat slide move $psl^*_i$ for each dual 2-handle. 
\end{proof}









If we assume that $\mathbf c^*$ is the system corresponding to the  curves depicted in Figure~\ref{figstandard}, then  we have $c_i^*=\widehat b_i$. In this case, as the following proposition shows,   the $b$-type generators are redundant  in order to describe a link in $M$  as the plat closure of a braid $\beta \in B_{g,2n}$.

\begin{prop}\label{prop:no_b}
A braid $\beta \in B_{g,2n}$  is $\Sigma$-equivalent to a braid  $\beta' \in B_{g,2n}$  with no $b$-type generators. Furthermore, all $b$-type generators can be removed using $M$-moves and $sl_i^*$ moves. 
\end{prop}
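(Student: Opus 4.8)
The plan is to exploit the special form of the dual attaching circles when $\mathbf{c}^*$ is the standard system of Figure~\ref{figstandard}: each $c_i^*$ is exactly $\widehat{b_i}$, so a dual slide move along the $i$-th dual $2$-handle amounts to a plat connected sum with $\overline{c_i^*}=b_i$ (as a two-strand braid). The key observation is that, since $b_i$ is the braid representative of the very curve we slide along, a $b_i$-generator appearing inside $\beta$ can be interpreted geometrically as an arc of $\widehat\beta$ running once along the $i$-th meridian, i.e. exactly the kind of arc that a dual slide move $sl_i^*$ can absorb or cancel. Thus the strategy is to show that each occurrence of $b_i^{\pm 1}$ in a word for $\beta$ can be traded for a dual plat slide move $psl_i^*$ together with moves that are already part of $\Sigma$-equivalence.

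First I would fix a word for $\beta$ in the generators $\sigma_j, a_i, b_i$ and locate the \emph{last} occurrence of some $b_i^{\pm1}$ in the word (reading from bottom to top, say). Using Lemma~\ref{lemma:top} and Lemma~\ref{lemma:c}, I would arrange that the corresponding meridian-winding arc of $\widehat\beta$ sits at the bottom-left strand and is positioned so that it coincides with the braiding of $c_i^*=\widehat{b_i}$. The goal of this normalization is to realize the subarc of $\widehat\beta$ carrying the $b_i$-winding as precisely the arc introduced by a dual slide move $sl_i^*$ against $c_i^*$. Concretely, I would show that performing $sl_i^*$ (equivalently $psl_i^*$, a plat sum with $b_i$) on a suitably modified braid cancels that meridian winding: because $c_i^*=\widehat{b_i}$, the plat sum $\beta'\# \overline{c_i^*}=\beta' b_i$ can be read in reverse to \emph{remove} a $b_i$ at the cost of a slide move and $\Sigma$-equivalence moves. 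Iterating this over all occurrences of all $b_i^{\pm1}$ strips the word of every $b$-type generator.

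The step I expect to be the main obstacle is controlling how the remaining generators in the word interact with the $b_i$ being removed — in particular, commuting the chosen $b_i$ past intervening $\sigma_j$ and $a_k$ factors so that it reaches the position where it can be matched with the slide. The mixed relations (R1)--(R4) and the total relation (TR) let one move $b_r$ past $\sigma_i$ for $i\neq 1$ freely, but passing it across $\sigma_1$ or across $a$-generators produces conjugation terms, and one must check that these extra terms are themselves expressible via $M$-moves (slides of the first arc, which include the meridian slides (M5)) without reintroducing uncancelled $b$'s. I would handle this by an induction on the number of $b$-generators, arguing that each elementary commutation either preserves the count or replaces a $b_i$ by a controlled product that can again be absorbed by (M5) and a slide move; the $\Sigma$-equivalence relations (M4), (M5) are exactly the meridian- and longitude-slide relations needed to keep the bookkeeping consistent.

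For the ``furthermore'' clause, I would note that the reduction just described uses only $\Sigma$-equivalence moves (hence $M$-moves) together with dual slide moves $sl_i^*$ — no $psl_i$ against the nonstandard curves $\mathbf c$ is ever invoked, since the cancellation is driven entirely by the identity $c_i^*=\widehat{b_i}$. Thus the same argument simultaneously proves both assertions: the resulting braid $\beta'\in B_{g,2n}$ is $\Sigma$-equivalent (via $M$-moves) to $\beta$ after the slide moves are accounted for, and every $b$-type generator has been eliminated using precisely $M$-moves and $sl_i^*$ moves, as claimed.
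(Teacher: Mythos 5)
There is a genuine gap. Your overall strategy --- pick the last $b$-type letter, push it to the bottom boundary, and cancel it there against a dual plat slide move using the identity $c_i^*=\widehat{b_i}$ --- is the same as the paper's, but you have flagged the crucial step (getting $b_i^{\pm 1}$ past the intervening $\sigma_1$'s and $a$-type letters) without actually supplying a mechanism for it, and the mechanism you sketch does not work as stated. The presentation of $B_{g,2n}$ has no relation that moves $b_r$ past $\sigma_1$ alone: $(R1)$ only gives commutation with $\sigma_i$ for $i\neq 1$, and $(R2)$--$(R4)$ all intertwine $b_r$ with $\sigma_1$-conjugates of other $a$- or $b$-letters. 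So ``each elementary commutation either preserves the count or replaces $b_i$ by a controlled product'' is precisely the assertion that needs proof, and an induction on the number of $b$-letters does not address it, since the obstruction already occurs for a word with a single $b_i$ sitting above an arbitrary tangle of $\sigma_j$'s on the first strand. Your appeal to Lemmas~\ref{lemma:top} and~\ref{lemma:c} is also a misapplication: those lemmas normalize where the \emph{band} of a band connected sum is attached to $\widehat\beta$ and to $c_i^{(*)}$; they say nothing about transporting a meridian winding that occurs at an intermediate height of the braid down to the boundary $\Sigma_g\times\{0\}$.

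The paper's missing ingredient is a stabilization. Immediately after the chosen $b_i^{\pm 1}$ one performs an $(M6)$ move, creating a fresh pair of strands joined by a trivial arc; the $b$-arrow is transferred to this new strand, which runs straight to the bottom of the braid without crossing the moving strands of $\beta$. The only letters it must still pass are the $a$-type generators (whose strands wind around the fixed strands and hence cannot be avoided), and for each of these one chooses the over/under crossings of the stabilization strands so that exactly $(R3)$ or $(R4)$ applies, pushing the $b$ through without generating uncontrolled residue. Once the $b$ sits at the bottom on the first two strands it is erased by an inverse dual plat slide $psl_i^*$, and the process repeats. Since $(M6)$, the braid relations, and $sl_i^*$ are all among the allowed moves, this also yields the ``furthermore'' clause. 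Without the stabilization step your reduction stalls at the first $\sigma_1$ below the chosen $b_i$, so you should incorporate it (or an equivalent device) before the rest of your argument can go through.
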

\begin{proof}



First observe that we can push a $b$-type generator through an $a$-type generator using either $(R4)$ or $(R3)$. Pick the last $b$ generator, i.e. a letter $b_i\pmo$, in the word $\beta$ (Figure~\ref{fig:b1}) and make a stabilization move right after the generator (Figure~\ref{fig:b2}). For each $a$-type generator we choose the stabilization strands to go either under or over the interfering strands of the $a$-type generator in such a way that relations $(R3)$ and $(R4)$ can be applied. We can now push $b$ to the bottom and remove it by the $sl_i^*$ move (Figures \ref{fig:b3} and \ref{fig:b4}).
We repeat this process until all $b$-type generators are removed (Figures \ref{fig:b5} and \ref{fig:b6})

\begin{figure}[ht]
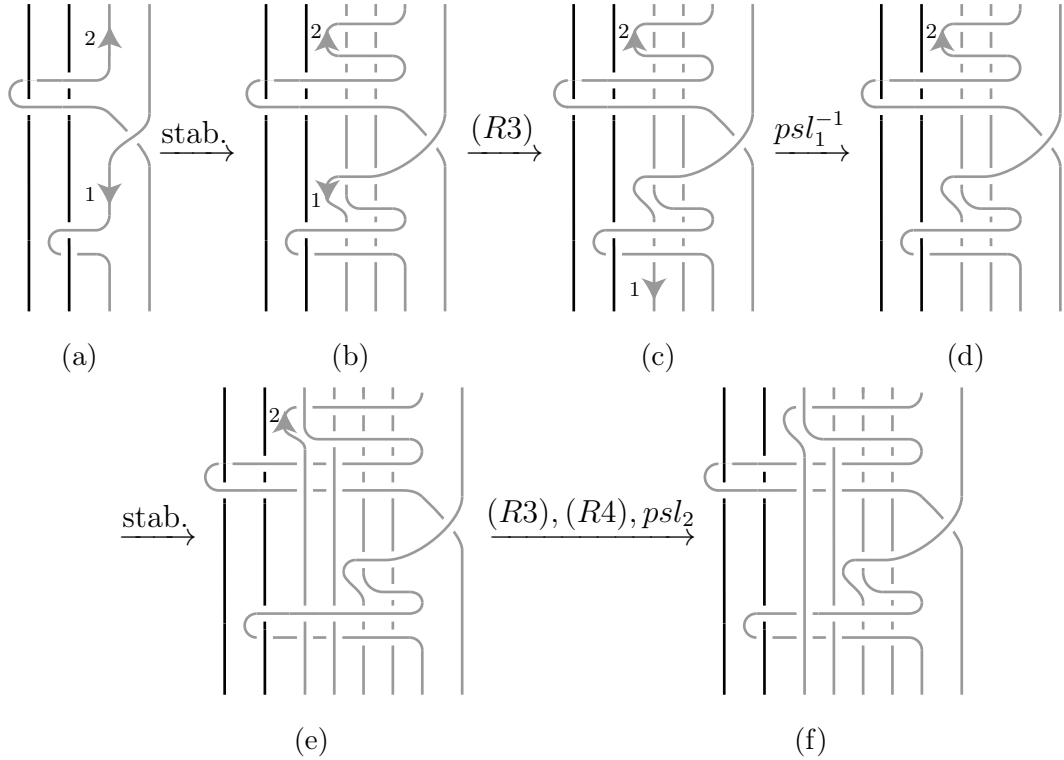

\centering
\begin{subfigure}[b]{0.15\textwidth}    
\centering\begin{overpic}[page=72]{images}\put(26,87){$_2$}\put(26,38){$_1$}\end{overpic}\caption{}\label{fig:b1}
\end{subfigure}\raisebox{7em}{$\xrightarrow[]{\makebox[0.8cm]{stab.}}$}
\begin{subfigure}[b]{0.22\textwidth}    
\centering\begin{overpic}[page=73]{images}\put(22.5,89){$_2$}\put(22.5,36){$_1$}\end{overpic}\caption{}\label{fig:b2}
\end{subfigure}\raisebox{7em}{$\xrightarrow[]{\makebox[0.8cm]{$(R3)$}}$}
\begin{subfigure}[b]{0.22\textwidth}    
\centering\begin{overpic}[page=74]{images}\put(22.5,89){$_2$}\put(26,8){$_1$}\end{overpic}\caption{}\label{fig:b3}
\end{subfigure}\raisebox{7em}{$\xrightarrow[]{\makebox[0.8cm]{$psl_1^{-1}$}}$}
\begin{subfigure}[b]{0.22\textwidth}    
\centering\begin{overpic}[page=75]{images}\put(22.5,89){$_2$}\end{overpic}\caption{}\label{fig:b4}
\end{subfigure}\qquad\raisebox{7em}{$\xrightarrow[]{\makebox[0.8cm]{stab.}}$}
\begin{subfigure}[b]{0.22\textwidth}    
\centering\begin{overpic}[page=76]{images}\put(22.5,89){$_2$}\end{overpic}\caption{}\label{fig:b5}
\end{subfigure}\qquad\raisebox{7em}{$\xrightarrow[]{\makebox[2.5cm]{$(R3),(R4),psl_2$}}$}
\begin{subfigure}[b]{0.22\textwidth}    
\centering\begin{overpic}[page=77]{images}\end{overpic}\caption{}\label{fig:b6}
\end{subfigure}
\caption{Removing $b$-type generators from a braid.}\label{fig:b}
\end{figure}

\end{proof}

\end{section}

\begin{section}{The case of genus one Heegaard  splittings}
\label{example}
In this section we  provide explicit examples of slide moves for manifolds admitting genus one Heegaard splittings, that is lens spaces, $S^2 \times S^1$ and  the 3-sphere.\\

If $M$ is the lens space $L(p,q)$, where $p$ and $q$ are coprime integers such that $0 < q < p$, then
$M$ has a genus 1 Heegaard splitting, sending the meridian of $H_2$ to the  $(p,-q)$-curve on the torus $T=\partial H_1$, see  Figure~\ref{fig:lpq1} for the $L(5,2)$ case.

\begin{figure}[ht]
\centering
\begin{subfigure}[t]{0.4\textwidth}\centering
    \includegraphics*[page=80]{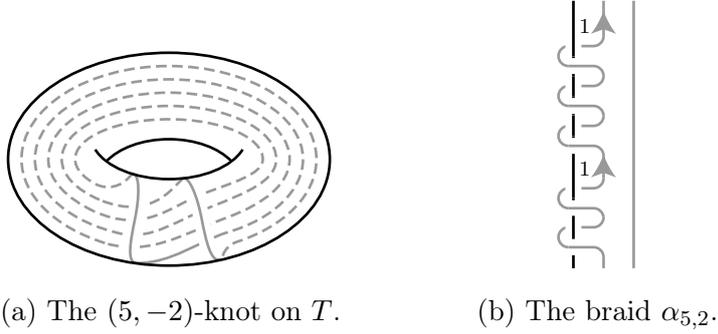}\caption{The $(5,-2)$-knot on $T$.}\label{fig:lpq1}
\end{subfigure}
\begin{subfigure}[t]{0.4\textwidth}\centering
\begin{overpic}[page=79]{images}\put(11,87){$_1$}\put(11,36){$_1$}\end{overpic}\caption{The braid $\alpha_{5,2}$.}\label{fig:lpq2}
\end{subfigure}
\caption{Braiding of the torus knot $(5,-2)$.}
\end{figure}

The  $(p,-q)$ torus knot is the plat closure of the braid with $q$ generators $b^{-1}_1$ evenly distributed between $p$ generators $a_1$ (see also~\cite{G,GM1}):
$$\alpha_{p,q} = \prod_{i=1}^{r} b_1^{-1} a_1^{\lceil \frac{p}{q}\rceil} \cdot \prod_{i=1}^{q-r} b_1^{-1} a_1^{\lfloor \frac{p}{q}\rfloor} \in B_{1,2},$$
where $r \equiv  p \; \mbox{(mod }q\mbox{)}$, see Figure~\ref{fig:lpq2} for the $L(5,2)$ case.

The  plat slide move (resp. dual plat slide move) for $L(p,q)$, which we denote by $psl_{p,q}$ (resp. $psl^*_{p,q}$) is thus
\begin{align*}
psl_{p,q}:\; & \beta \lra \alpha_{p,q} \# \beta = \alpha_{p,q} \, \beta,\\
psl^*_{p,q}:\; & \beta \lra \beta \# b_1 = \beta \, b_1.
\end{align*}

In particular, for $L(p,1)$ we have
$$psl_{p,1}: \beta \lra b_1^{-1}a_1^p \# \beta = b_1^{-1}a_1^p \beta.$$

Beside lens spaces, the only other Heegaard genus one manifold is $S^2\times S^1$, which can be viewed as the degenerate lens space $L(0,1)$. The manifold $S^2\times S^1$ admits a Heegaard splitting $(H_1,H_2,h)$, where $h:\partial H_2 \lra \partial H_1$  sends the meridian  of $H_2$ to the meridian  of $H_1$. 

The plat slide moves in this case are:
\begin{align*}
psl_{0,1}:\; & \beta \lra b_1 \# \beta = b_1 \, \beta,\\
psl^*_{0,1}:\; & \beta \lra \beta \# b_1 = \beta \, b_1.
\end{align*}



Lastly, the 3-sphere $S^3$, viewed as the degenerate lens space $L(1,0)$, admits a genus 1 Heegaard splitting $(H_1,H_2,h)$, where $h:\partial H_2 \lra \partial H_1$ is a homeomorphism that sends the meridian  of $H_2$ to the longitude  of $H_1$.  We have the following two plat slide moves:
\begin{align*}
psl_{1,0}:\; & \beta \lra a_1 \# \beta = a_1\beta, \\
psl^*_{1,0}:\; & \beta \lra \beta \# b_1 = \beta \, b_1.
\end{align*}

With the same methods of Proposition~\ref{prop:no_b} we can kill all $a_1$ generators from a word $\beta \in B_{1,2n}$ by moving them to the top and applying the plat slide move. Every link $L \subset S^3$ can be thus represented as a closed braid without $a_1$ or $b_1$  generators and the theory collapses to that of the the usual genus zero Heegaard splitting
of $S^3$ introduced in~\cite{B}.


\end{section}

\footnotesize

\bigskip

\begin{flushleft}

\vbox{
Alessia~CATTABRIGA\\
Department of Mathematics, University of Bologna\\
Piazza di Porta San Donato 5, 40126 Bologna, ITALY\\
e-mail: \texttt{alessia.cattabriga@unibo.it}}

\bigskip

\vbox{Bo\v{s}tjan~GABROV\v{S}EK\\
Faculty of Mathematics and Physics,  University of Ljubljana\\
Jadranska ulica 19, 1000 Ljubljana, SLOVENIA\\
e-mail: \texttt{bostjan.gabrovsek@fmf.uni-lj.si}}

\end{flushleft}


\begin{thebibliography}{99}

\bibitem{A}
J.~Alexander,  \textit{A lemma on a system of knotted curves}, Proc. Natl. Acad. Sci. USA. \textbf{9} (1923), 93--95.


\bibitem{Be}
P.~Bellingeri, \textit{On presentations of surface braid groups,} J. Algebra \textbf{274} (2004), 543--563.

\bibitem{BC}
P.~Bellingeri and A.~Cattabriga, \textit{Hilden braid groups,}  J. Knot Theory Ramifications \textbf{21} (2012), 1250029-1--1250029-22.	


\bibitem{Big}
S.~Bigelow, \textit{A homological definition of the Jones polynomial},   Geom. Topol. Monogr. \textbf{4} (2002), 29--41.

\bibitem{B1}
J.~Birman, ``Braid, links and mapping class group'', Princeton University Press, Princeton, 1975.

\bibitem{B}
J.~Birman, \textit{On the stable equivalence of plat representation of knots
and links,} Canad. J. Math. \textbf{28} (1976), 264--290.

\bibitem {C}
A.~Cattabriga,  \textit{The Alexander polynomial  of (1,1)-knots,} J. Knot
Theory Ramifications \textbf{15} (2006), 1119--1129.




\bibitem{CM2}
A.~Cattabriga and M.~Mulazzani, \textit{All strongly-cyclic branched
coverings of (1,1)-knots are Dunwoody manifolds}, J. Lond. Math. Soc. \textbf{70},
(2004), 512--528.


 \bibitem{CM}
A.~Cattabriga and M.~Mulazzani, \textit{Extending homeomorphisms from
punctured surfaces to handlebodies}, Topol. Appl. \textbf{155} (2008),
610--621.

\bibitem{CMV}
P.~Cristofori, M.~Mulazzani  and A.~Vesnin: \textit{Strongly-cyclic
branched coverings of knots via (g,1)-decompositions},  Acta Math.
Hungar. \textbf{116} (2007), 163--176.

\bibitem{DL} I. Diamantis and S. Lambropoulou, \textit{Braid equivalence in 3-manifolds with rational surgery description}, Topol. Appl. \textbf{194} (2015), 269--295.

\bibitem{DLP} I. Diamantis, S. Lambropoulou and J. Przytycki,
\textit{Topological steps toward the Homflypt skein module of the lens spaces $L(p, 1)$ via braids}, J. Knot Theory Ramifications {\bf 25} (2016), 1650084.

\bibitem {Do}
H. Doll, \textit{A generalized bridge number for links in
3-manifold}, Math. Ann. {\bf 294} (1992), 701--717.

\bibitem{G} B. Gabrov\v sek, \textit{Tabulation of Prime Knots in Lens Spaces}, Mediterr. J. Math. {\bf 14}:88 (2017).


\bibitem{GM1} B. Gabrov\v sek and  M. Mroczkowski, \textit{The HOMFLYPT skein module of the lens spaces $L_{p,1}$}, J. Knot Theory Ramifications \textbf{20} (2011), 159--170.


\bibitem {GMM}
H. Goda, H. Matsuda and T. Morifuji, \textit{Knot Floer homology of
$(1,1)$-knots}, Geom. Dedicata {\bf 112} (2005), 197--214.

\bibitem{GS} R. E. Gompf and  A. Stipsicz, ``4-Manifolds and Kirby Calculus'', American Mathematical Society, Providence, 1999.

\bibitem{He} P.~Heegaard, \textit{Forstudier til en topologisk Teori for de
algebraiske Fladers Sammenhang}, Ph.D. thesis, Copenhagen, 1989.

\bibitem{H}
H. M.~Hilden, \textit{Generators for two groups related to the braid groups}, Pacif. J. Math. \textbf{59}
(1975), 475--486.




\bibitem{Lamb1} S. Lambropoulou and C. P. Rourke, \textit{Markov's theorem in 3-manifolds}, Topology Appl.{\bf 78} (1997), 95--122.

\bibitem{Lamb2} S. Lambropoulou and C. P. Rourke, \textit{Algebraic Markov equivalence for links in 3-manifolds}, Compositio Math. {\bf 142} (2006), 1039--1062.


\bibitem{Ma} A. A. Markov, \textit{\"{U}ber die freie \"{A}quivalenz geschlossener Z\"{o}pfe}, Recusil Math\'ematique Moscou \textbf{1} (1935), 73--78.

\bibitem{Mi} J.~Milnor, ``Morse theory'', Princeton University Press, Princeton,  1963. 


\bibitem{M}
Y.~Moriah, \textit{Heegaard splittings of Seifert fibered spaces}, 
Invent. Math. \textbf{91}, 465--481 (1988).

\bibitem{Mr1} M. Mroczkowski, \textit{Kauffman bracket skein module of a family of prism manifolds}, J. Knot Theory Ramifications \textbf{20} (2011), 159--170.

\bibitem{Mr2} M. Mroczkowski, \textit{Kauffman bracket skein module of the connected sum of two projective spaces}, J. Knot Theory Ramifications \textbf{20} (2011), 651--675.

 \bibitem{rolfs}  D.~Rolfsen, ``Konts and Links'', Publish or Perish Press, Berkeley, 1976.

\bibitem{Sk} R.~Skora, \textit{Closed braids in 3-manifolds}, Math. Z. \textbf{211} (1992), 173--187.

\end{thebibliography}
\end{document}